\newtheorem{thm}{Theorem}[section]
\newtheorem{claim}[thm]{Claim}
\newtheorem{lem}[thm]{Lemma}
\newtheorem{define}[thm]{Definition}
\newtheorem{cor}[thm]{Corollary}
 \newtheorem{notation}[thm]{Notation}
\newcommand{\comment}[1]{}
\newcommand{\remove}[1]{}
\newcommand{\comments}[1]{}
\newcommand{\Z}{\mathbb{Z}}
\renewcommand{\Pr}{\mathbf{Pr}}
\newcommand{\ip}[2]{\langle #1,#2 \rangle}
\DeclareMathOperator*{\E}{\mathbb{E}}
\newcommand{\MV}{\text{MV}}
\renewcommand{\vec}[1]{\boldsymbol{#1}}
\newcommand{\vmod}[2]{#1^{(#2)}}
\newcommand{\vdiv}[2]{#1^{[#2]}}
\title{\bf Matching-Vector Families and LDCs Over Large Modulo}
\author{Zeev Dvir\thanks{Department of Computer Science and Department of Mathematics, Princeton University, Princeton NJ.
Email: \texttt{zeev.dvir@gmail.com}. Research partially
supported by NSF grants CCF-0832797, CCF-1217416 and by the Sloan fellowship.}, Guangda Hu\thanks{Department of Computer Science  Princeton University, Princeton NJ.
Email: \texttt{guangdah@cs.princeton.edu}. Research partially
supported by NSF grants CCF-0832797, CCF-1217416 and by the Sloan fellowship.}}
\date{}
\begin{document}
\maketitle
\begin{spacing}{1.5}

\begin{abstract}
We prove new upper bounds on the size of families of vectors in $\Z_m^n$ with restricted modular inner products, when $m$ is a large integer. More formally, if $\vec{u}_1,\ldots,\vec{u}_t \in \Z_m^n$ and $\vec{v}_1,\ldots,\vec{v}_t \in \Z_m^n$ satisfy $\langle\vec{u}_i,\vec{v}_i\rangle\equiv0\pmod m$ and  $\langle\vec{u}_i,\vec{v}_j\rangle\not\equiv0\pmod m$ for all $i\neq j\in[t]$,  we prove that $t \leq O(m^{n/2+8.47})$. This improves a recent bound of $t \leq m^{n/2 + O(\log(m))}$ by  \cite{BDL13} and is the best possible up to the constant $8.47$ when $m$ is sufficiently larger than $n$. 

The maximal size of such families, called `Matching-Vector families', shows up in recent constructions of  locally decodable error correcting codes (LDCs) and determines the rate of the code. Using our result we are able to show that these codes, called Matching-Vector codes, must have encoding length at least $K^{19/18}$ for $K$-bit messages, regardless of their query complexity. This improves a known super linear bound of $ K2^{\Omega({\sqrt{\log K}})}$ proved in \cite{DGY11}.
\end{abstract}

\section{Introduction}

A Matching-Vector family (MV family) in $\mathbb{Z}_m^n$ is defined as a pair of ordered lists  $U=(\vec{u}_1,\ldots,\vec{u}_t)$ and $V=(\vec{v}_1,\ldots,\vec{v}_t)$  with $\vec{u}_i,\vec{v}_j\in\mathbb{Z}_m^n$, satisfying the following property: for all $i\in[t]$, $\langle\vec{u}_i,\vec{v}_i\rangle\equiv0\pmod m$ whereas for all $i\neq j\in[t]$, $\langle\vec{u}_i,\vec{v}_j\rangle\not\equiv0\pmod m$. Here $\langle\cdot,\cdot\rangle$ denotes the standard inner product. If one restricts the entries of the vectors in the family to be in the set $\{0,1\}$ the inner products corresponds to the sizes of the intersections (modulo $m$) and, in this case, MV families are more commonly referred to as families of sets with restricted modular intersections. MV families were studied previously in the context of Ramsey graphs \cite{Gro00}, circuit complexity \cite{BBR} and, more recently, were used to construct Locally Decodable Codes (LDCs) \cite{Yek08, Efr09,DGY11}, which are error correcting codes with super-efficient decoding properties. We will elaborate more on the connection to LDCs after we state our results.

We denote by $\MV(m,n)$  the size of the largest MV family in $\mathbb{Z}_m^n$ (the size of the family is $t$ in the above notation). It is an interesting (and mostly open) question to determine the value (or even order of magnitude) of $\MV(m,n)$ for arbitrary $m$ and $n$. Upper and lower bounds on $\MV(m,n)$ can be roughly divided into two kinds, corresponding to the relative size of the two parameters. One typical regime is when $m$ is small and $n$ tends to infinity and the other is when $m >> n$ (of course there are intermediate scenarios as well). 

Although our work focuses on the regime when $m$ is much larger than $n$, we first describe the known results for the other regime, namely when $m$ is a fixed constant and $n$ tends to infinity. These regime is further  divided  into the case when $m$ is prime and when $m$ is composite. When $m$ is a small {\em prime}  and $n$ tends to infinity, the value of $\MV(m,n)$ is known to be of the order of $n^{m-1}$ \cite{BF98}. When $m$ is a small {\em composite}, the picture is very different and there are exponential gaps between known lower and upper bounds on $\MV(m,n)$. A surprising construction by Grolmuzs \cite{Gro00} shows that $\MV(m,n) \geq \exp\left(c\cdot\frac{\log(n)^r}{(\log\log n)^{r-1}}\right)$ when $m$ has $r$ distinct prime factors (here $c$ is an absolute constant). That is, $\MV(m,n)$ can be super-polynomial in $n$ (that is $n^{\omega(1)}$)  for $m$ as small as $6$ (compared with the polynomial upper bound $n^{m-1}$ for prime $m$). A trivial upper bound on $\MV(m,n)$ is $m^n$ since an MV family cannot contain the same vector twice. The best upper bound on $\MV(m,n)$ for small composite $m$ was proved in \cite{BDL13} and is $m^{n/2 + O(\log m)}$. Assuming the Polynomial-Freiman-Ruzsa (PFR) conjecture \cite{TV} this can be improved to $\MV(m,n) \leq C_m^{ n/\log(n)}$ with $C_m$ a constant depending only on $m$. 

\begin{table}[htb]
\centering
\begin{tabular}{|c|c|}
\hline
$m$ & upper bound for $\MV(m,n)$ \\
\hline
general prime & $O(m^{n/2})$ \cite{DGY11} \\
\hline
small, fixed prime & $O(n^{m-1})$ \cite{BF98} \\
\hline
general composite & $m^{n/2+O(\log m)}$ \cite{BDL13} \\
\hline
small, fixed composite & $2^{O_m(n/\log n)}$ \cite{BDL13} (assuming PFR) \\
\hline
general composite & $O(m^{n/2+8.47})$ (Theorem~\ref{thm:main}) \\
\hline
\end{tabular}
\caption{List of upper bounds on $\MV(m,n)$}
\end{table}

{\bf Our work} focuses on the regime when $m$ is larger than $n$. In this setting, a construction 
of \cite{YGK12} gives $\MV$ families of size $\left(\frac{m+1}{n-2}\right)^{n/2-1}$ \cite{YGK12}. For a large {\em prime} $m$, this construction almost matches an upper bound of $O(m^{n/2})$ proved in \cite{DGY11}. For composite $m$, the best upper bound on $\MV(m,n)$ for large $m$ prior to this work was the same $m^{n/2+O(\log m)}$ bound from \cite{BDL13}. Notice that, when $m > 2^n$,  this bound is meaningless since it exceeds the trivial bound of $m^n$.  In this work we extend the proof method developed in \cite{BDL13} to give the following bound:

\begin{thm} \label{thm:main} 
	For all integers $m >1, n$ we have  $\MV(m,n) \leq 100m^{n/2+8.47}$. When $m$ is a product of distinct primes the constant $8.47$ can be replaced with $4+o(1)$.
\end{thm}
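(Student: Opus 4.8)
The plan is to push the rank-based argument of \cite{BDL13} through more carefully. The template, already present in \cite{DGY11} for prime moduli, is the following. After observing that no two of the $\vec u_i$ (and no two of the $\vec v_i$) can be scalar multiples of one another in $\Z_m^n$ — else $\langle\vec u_i,\vec v_j\rangle\equiv 0\pmod m$ for some $i\neq j$, contradicting the family property — one encodes the $t\times t$ identity matrix $[\delta_{ij}]$ as a product of two matrices indexed by $[t]$ whose common inner dimension $N$ is the size of a certain auxiliary space of polynomial data attached to the pairs $(\vec u_i,\vec v_i)$. Since $[\delta_{ij}]$ has rank $t$ this forces $t\le N$, and the whole game is to bound $N$ by $O(m^{n/2+O(1)})$. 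The diagonal relations $\langle\vec u_i,\vec v_i\rangle\equiv 0$ are what make this possible: in their presence one can cut the exponent of the bound from $n$, the value produced by the naive encoding, down to $n/2$ — this is exactly the bound $O(m^{n/2})$ of \cite{DGY11} for prime $m$, which I would use as a black box. All the new work is in keeping the template alive for composite $m$ with only a bounded additive loss in the exponent.

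Two features of composite $m=\prod_s p_s^{e_s}$ make this nontrivial, and dealing with them is the whole proof. First, $\Z_{p^e}$ is not a field, and for $e\ge 2$ there is no polynomial function on $\Z_{p^e}$ equal to the indicator of $0$ — indeed $f(p^{e-1})\equiv f(0)\pmod{p^{e-1}}$ for every polynomial $f$ — so the clean $1-x^{p-1}$ device has no exact analogue. One must instead stratify by $p$-adic valuation: for each prime power $p_s^{e_s}\,\|\,m$, peel off the sub-structures in which the off-diagonal inner products have valuation $0,1,\dots,e_s-1$, treat each stratum as an MV-type problem ``one level down,'' and reassemble. Second, the non-vanishing condition $\langle\vec u_i,\vec v_j\rangle\not\equiv 0\pmod m$ is an OR over the prime powers, not an AND: the family need not restrict to an MV family modulo any single $p_s^{e_s}$, so the cheap factorization $\delta_{ij}=\prod_s\delta^{(s)}_{ij}$ followed by multiplying ranks loses far too much. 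The way out is to carry out the encoding once, globally over $\Z_m$ (or over a cyclotomic extension of $\mathbb{Q}$, where roots of unity are available and the indicator of $0$ can be written exactly), so that the $n/2$ phenomenon is invoked a single time and the contributions of the various primes and valuation levels enter the exponent additively rather than multiplicatively.

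The main obstacle is precisely to make that final assembly cost only an absolute constant. A crude stratification — a constant factor for each prime and each valuation level — loses a factor of the form $C^{\sum_s e_s}$, and since $\sum_s e_s$ can be as large as $\log_2 m$ this is exactly the $m^{n/2+O(\log m)}$ of \cite{BDL13}; the improvement is to organize the levels so these per-level losses telescope, so that each $p_s^{e_s}$ contributes only $(p_s^{e_s})^{n/2+O(1)}$ with a single uniform $O(1)$, whence the product over $s$ is $m^{n/2}\cdot m^{O(1)}$. Optimizing the trade-off in this step — how much ``degree'' to spend detecting a given valuation stratum versus how much is lost in the $n/2$ step there — is what pins down the explicit $8.47$. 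When $m$ is squarefree every $e_s=1$, there are no deeper strata, the per-prime-power problem is exactly the one solved in \cite{DGY11}, and the only overhead is a bounded factor for each \emph{distinct} prime of $m$; since $m$ has only $o(\log m)$ distinct prime factors, this overhead is $m^{o(1)}$ and the constant improves to $4+o(1)$.
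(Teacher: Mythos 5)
Your proposal correctly names the two obstacles (no exact zero-indicator polynomial over $\Z_{p^e}$, and the non-vanishing condition being an OR over the prime powers), but it stops exactly where the proof has to begin. The entire content of the theorem is the claim that the per-level losses ``telescope, so that each $p_s^{e_s}$ contributes only $(p_s^{e_s})^{n/2+O(1)}$ with a single uniform $O(1)$''; you state this as a goal and say that optimizing it ``pins down the explicit $8.47$,'' but you give no mechanism by which a rank encoding over a cyclotomic extension would produce it. The paper in fact uses no rank argument at all. It works by Fourier analysis in the spirit of \cite{BDL13}: one maintains a partition $r_1r_2r_3=m$ and a subfamily in which $\vmod{\vec{u}_i}{r_1}$, $\vmod{\vec{v}_i}{r_2}$ and two auxiliary inner products are constant, so that $\langle\vec{u}_i,\vec{v}_j\rangle\equiv0\pmod{r_1r_2}$ for all pairs (Claim~\ref{claim:0diag}). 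Since $\langle\vec{u},\vec{v}\rangle/(r_1r_2)$ hits $0$ modulo $r_3$ with probability only $1/t\leq1/(100r_3)$, Lemma~\ref{lem:bias} produces a character of some order $s\mid r_3$ with bias at least $1/(sf(s))$, where $f$ is any function with $\sum_{s\geq2}1/f(s)\leq0.99$; bucketing by residues modulo $s$ and Cauchy--Schwarz then extract a subfamily of size at least $t/(s^{n/2+4}f(s)^2)$ respecting a finer partition with $r_3$ replaced by $r_3/s$ (Lemma~\ref{lem:main}). The telescoping you want is precisely the fact that the orders $s_1,\dots,s_k$ found over all rounds satisfy $\prod_i s_i\leq m$, so $\prod_i s_i^{n/2+4}\leq m^{n/2+4}$ and, with $f(s)=s^{1.735}$, $\prod_i f(s_i)^2\leq m^{3.47}$. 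Without an analogue of Lemma~\ref{lem:bias} --- a bias bound that degrades only with the order $s$ of the character actually found, rather than with $r_3$ or $m$ --- a stratification by $p$-adic valuation pays a fixed cost per level and recovers only the $m^{n/2+O(\log m)}$ bound of \cite{BDL13} that you are trying to beat.

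A secondary problem: in your last paragraph you assert that for squarefree $m$ ``the per-prime-power problem is exactly the one solved in \cite{DGY11},'' but, as you yourself observe two paragraphs earlier, the family need not restrict to an MV family modulo any single prime factor, so the prime-modulus bound cannot be invoked prime-by-prime as a black box. The paper's improvement for squarefree $m$ comes from elsewhere: since $s$ is then coprime to $r_1r_2$, the denominator in the exponent of the character can be cleared and one may bucket by $\vmod{\vec{u}}{s}$ and $\vmod{\vec{v}}{s}$ directly, saving a factor of roughly $s^{2}f(s)$ per round and yielding the exponent $4+o_m(1)$ with a near-linear choice of $f$.
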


For small $n$, this bound is tight up to the constant $8.47$ as the \cite{YGK12} construction shows. When $m$ is small, this still gives some improvement over the $m^{n/2 + O(\log m)}$ bound of \cite{BDL13} but not as dramatic (and probably far from being tight).

The main tool in our proof is Fourier analysis in the spirit of \cite{BDL13}, with which we repeatedly reduce $m$ to one of its factor (eventually reaching the case of $m=1$). The distribution of $\ip{\vec{v_i}}{\vec{u_j}}$ over random $i,j \in [t]$ is far from the uniform distribution (since the probability of obtaining zero is small). This fact is used to find a large coefficient in its Fourier spectrum. This coefficient is then used to carve out a large sub family which is again an MV family, but over some proper factor of $m$. The proof ends when we reach the case of prime $m$. The difference between our proof and the one in \cite{BDL13} is in the choice of the large coefficient (or character). We are able to show that a large character appears that has nicer number theoretic properties and so are able to analyze the loss in each step in a better way -- getting rid of the $O(\log m)$ factor in the exponent.

\subsection{$\MV$ families and Locally Decodable Codes}

A $(q,\delta,\epsilon)$-Locally Decodable Code, or LDC, encodes a $K$-symbol message $x$ to an $N$-symbol codeword $C(x)$, such that every symbol $x_i$ ($i\in[K]$) can be recovered with probability at least $1-\epsilon$ by a randomized decoding procedure that makes only $q$ queries to $C(x)$, even if $\delta N$ locations of the codeword $C(x)$ have been corrupted. Understanding the minimum length $N=N(k)$ of an LDC with constant $q$ is a central research question that is still far from being solved. For $q=1,2$, this question is completely answered.  There are no LDCs for $q=1$ \cite{KT00} and the best LDCs for $q=2$ have exponential length \cite{GKST02, KdW04}. However, for $q>2$ there are huge gaps between lower bound and LDC constructions. The best known lower bound is $N=\tilde{\Omega}(K^{1+1/(\lceil r/2\rceil-1)})$ for $k>4$ \cite{Woo07} and $N=\Omega(K^2)$ for $k=3,4$ \cite{KdW04,Woo10}, while the best construction has super-polynomial length. Constructions of LDCs have been studied extensively for more than a decade. Until recently, all constructions of LDC with constant $q$ had exponential encoding length. In a breakthrough work of Yekhanin \cite{Yek08} and following improvements
\cite{Efr09,Rag07,KY09,IS10,CFL+10,DGY11,BET10}, a new family of LDCs  based on Matching Vector families was introduced. These codes, called Matching-Vector codes, rely on constructions of MV families and can have sub-exponential length for $q$ as small as 3 \cite{Efr09}. Using Grolmuzs construction as a building block, one obtains an encoding length of roughly $$N\sim\exp\exp\left((\log K)^{O(\log\log q/\log q)}(\log\log K)\right).$$ The size of the MV family used in the code construction is critical. In its simplest form, an MV code using an MV family of size $t$ in $\Z_m^n$ will send $K=t$ bits of message into $N = m^n$ bits of encoding and will require $q=m$ queries to decode. Several improvements are possible for reducing the number of queries below $m$ but these are case-based and hard to generalize for arbitrary $m$. 

Our improved bound on the size of MV families allows us to prove an unconditional lower bound on the encoding length of MV codes, {\em regardless of the query complexity}. 

\begin{thm}\label{thm-MVcodes}
For any  $MV$-code with message length $K$ and codeword length $N$ we have $N > K^{\frac{19}{18}}$. This bound is regardless of the number of queries.
\end{thm}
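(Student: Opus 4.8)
The plan is to reduce the lower bound on MV-code length to the upper bound on MV-family size from Theorem~\ref{thm:main}. Recall the basic MV-code parametrization mentioned in the introduction: an MV-code built from a family of size $t$ in $\Z_m^n$ encodes $K = t$ message bits into $N = m^n$ codeword symbols. (Strictly speaking, various optimizations change the query count but not this relationship between $K$, $m$ and $n$; I would state at the outset the exact correspondence being used, citing \cite{Efr09,DGY11}, and work with it as a black box.) So fix an MV-code with parameters $K$ and $N$, coming from an MV family in $\Z_m^n$, so that $K \le \MV(m,n)$ and $N = m^n$.

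First, I would invoke Theorem~\ref{thm:main} to get $K \le 100\, m^{n/2 + 8.47}$, hence $\log K \le \log 100 + (n/2 + 8.47)\log m$. The goal is to show $\log N = n \log m$ is at least $\tfrac{19}{18}\log K$. Writing $L = n\log m = \log N$, the bound on $K$ reads $\log K \le L/2 + 8.47\log m + \log 100$. Now there are two regimes to separate. If $m$ (equivalently $\log m$) is small relative to $L$ — say $\log m \le c\,L$ for a suitable small constant $c$ — then $8.47\log m + \log 100$ is a small fraction of $L$, so $\log K \le (1/2 + \text{small})L$, which gives $L \ge 2(1-\text{small})\log K$, far stronger than $\tfrac{19}{18}$. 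The point is that $19/18 < 2$ by a wide margin, so we have a lot of slack.

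The second regime is when $m$ is large, i.e. $\log m > c\,L = c\,n\log m$, which forces $n < 1/c$, i.e. $n$ bounded by an absolute constant. Here Theorem~\ref{thm:main} says the bound is \emph{tight up to the constant}, so we cannot beat $n/2$; instead I would argue directly. With $n$ a fixed small integer, $K \le 100\, m^{n/2+8.47}$ and $N = m^n$, so $\log K / \log N \le (n/2 + 8.47)/n + o(1) = 1/2 + 8.47/n + o(1)$ for $m$ large. This ratio is below $18/19$ precisely when $8.47/n < 18/19 - 1/2 = 13/38$, i.e. $n > 8.47 \cdot 38/13 \approx 24.75$, so $n \ge 25$ suffices. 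That leaves finitely many small values $n \le 24$ (and, in those cases, $m$ need not be huge) to handle by a cruder, case-specific argument — most likely by using the trivial bound $K \le m^n = N$ together with a refined count, or by noting that for such tiny $n$ the relevant MV codes have query complexity $q = m$ forced to be polynomial in $K$, which is itself known to be incompatible with $N < K^{19/18}$ via existing LDC lower bounds like \cite{Woo07}.

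The main obstacle I anticipate is \emph{the bookkeeping in the small-$n$ regime}: the clean exponent $1/2$ in Theorem~\ref{thm:main} is only an asymptotic statement ("$m$ sufficiently larger than $n$"), and the additive constant $8.47$ is large enough that for small $n$ the naive ratio $1/2 + 8.47/n$ exceeds $1$, so Theorem~\ref{thm:main} alone is vacuous there and one genuinely needs a second ingredient. I expect the cleanest route is to combine Theorem~\ref{thm:main} (handling all $n$ above an absolute threshold, and all $m$ not too large) with a separate known super-linear LDC lower bound (handling the finitely many remaining $(m,n)$ shapes, where $q=m$ is small), and then optimize the crossover to land exactly at the exponent $19/18$. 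The arithmetic is routine once the two-regime split is set up; the only real care is making sure the constants from both ingredients compose to give $19/18$ rather than something weaker.
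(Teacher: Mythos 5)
Your two-regime split according to the size of $n$ is exactly the structure of the paper's proof, and your large-$n$ case is essentially the paper's argument (modulo an arithmetic slip: $\frac{18}{19}-\frac{1}{2}=\frac{17}{38}$, not $\frac{13}{38}$, so Theorem~\ref{thm:main} already suffices for $n\geq 19$, which is the threshold the paper uses, rather than $n\geq 25$). The genuine gap is in the small-$n$ regime, which you correctly flag as the obstacle but do not resolve. The ingredient the paper uses there is a \emph{second} upper bound on MV families, namely $\MV(m,n)\leq m^{n-1+o_m(1)}$ from \cite{DGY11}, valid for every modulus $m$: for fixed $n\leq 18$ and $m\to\infty$ (which is forced, since $K\leq m^n$ and $K\to\infty$) this gives $K\leq m^{n-1+o(1)}<m^{n-\frac{n}{19}}=N^{\frac{18}{19}}$, closing the case.

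Neither of your proposed substitutes can work. The trivial bound $K\leq m^n=N$ gives only $N\geq K$, with no room to extract an exponent $\frac{19}{18}$. The LDC-lower-bound route fails because in this regime $N=m^n\leq m^{18}$, so $m\geq N^{1/18}\geq K^{1/18}$, i.e.\ the query complexity $q=m$ is polynomially large in $K$; the bounds of \cite{Woo07,KdW04} are for constant $q$ and degrade to essentially $N=\Omega(K)$ once $q$ grows, and indeed no lower bound of the form $N\geq K^{1+\epsilon}$ can hold for codes with $K^{\Theta(1)}$ queries (Reed--Muller-type codes rule this out). So the small-$n$ case genuinely requires an upper bound on $\MV(m,n)$ with exponent strictly less than $n$ by a constant, and you did not supply one; without it the proof does not go through.
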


This theorem improves on a bound of $N > K2^{\Omega({\sqrt{\log K}})}$ proved in \cite{DGY11}.
 
\subsection{Organization}
We begin in Section~\ref{sec-prel} with a number of preliminary lemmas and notations that will be used throughout the proof. In Section~\ref{sec-prooflem} we prove our main technical lemma which is the heart of our proof. The lemma is used iteratively in the proof of our main theorem which is given in Section~\ref{sec-proofthm}. The proof of the stronger bound for the case when $m$ is a product of distinct primes is given in the appendix.

\section{Preliminaries}\label{sec-prel}

\subsection{Fourier lemma}

We consider a probability distribution $\mu$ over $\mathbb{Z}_m$. Let $\omega_m=e^{\frac{2\pi}{m}i}$ be an order $m$ primitive root of unity. It is not difficult to see $\E_{x\sim\mu}[\omega_m^{jx}]=0$ for all $j\in\{1,2,\ldots,m-1\}$ if $\mu$ is the uniform distribution. We will show that $\E_{x\sim\mu}[\omega_m^{jx}]$ is bounded away from zero for some $j\in\{1,2,\ldots,m-1\}$ if $\mu$ is far from being uniform.

In \cite{BDL13}, it was shown that
\[\max_{1\leq j\leq m-1}\left|\E_{x\sim\mu}[\omega_m^{jx}]\right|=\Omega(\frac{1}{m^{1.5}})\]
if the statistical distance between $\mu$ and the uniform distribution is big, i.e.
$\frac{1}{2}\sum_{x\in\mathbb{Z}_m}|\mu(x)-\frac{1}{m}|=\Omega(\frac{1}{m}).$
In the following lemma, we prove a better lower bound that depends only on $s=\text{order}(\omega_m^j)$ under a stronger condition 
$|\mu(0)-\frac{1}{m}|=\Omega(\frac{1}{m}).$

Consider $\mu$ as a function from $\mathbb{Z}_m$ to $\mathbb{C}$. For $0\leq j\leq m-1$, the Fourier coefficient $\hat{\mu}(j)$ is
\[\hat{\mu}(j)=\frac{1}{m}\sum_{x\in\mathbb{Z}_m}\mu(x)\omega_m^{-jx}=\frac{1}{m}\E_{x\sim\mu}[\omega_m^{-jx}].\]
One can see that $\hat{\mu}(0)=\frac{1}{m}$. The set of functions $\{\omega_m^{jx}\mid0\leq j\leq m-1\}$ is an orthogonal basis for all functions from $\mathbb{Z}_m$ to $\mathbb{C}$, and the function $\mu(x)$ can be written as
\begin{equation} \label{eq:fourier}
\mu(x)=\sum_{j=0}^{m-1}\hat{\mu}(j)\omega_m^{jx}.
\end{equation}

\begin{lem} \label{lem:bias}
Let $\mu:\mathbb{Z}_m\mapsto[0,1]$ be a probability distribution over $\mathbb{Z}_m$ (i.e. $\sum_{x\in\mathbb{Z}_m}\mu(x)=1$). If $\mu(0)\leq\frac{1}{100m}$, there must exist $j\in\{1,2,\ldots,m-1\}$ such that $\left|\E_{x\sim\mu}[\omega_m^{jx}]\right|\geq\frac{1}{sf(s)}$, where $s=\frac{m}{\gcd(j,m)}$ is the order of $\omega_m^j$ for $\omega_m=e^{\frac{2\pi}{m}i}$, and $f:\mathbb{Z}^+\mapsto\mathbb{R}$ is any function satisfying $\sum_{s=2}^\infty\frac{1}{f(s)}\leq0.99$.
\end{lem}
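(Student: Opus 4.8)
The plan is to argue by contradiction and to extract the desired large character from a single evaluation of the Fourier inversion formula \eqref{eq:fourier} at the origin. Plugging $x=0$ into \eqref{eq:fourier} gives $\mu(0)=\sum_{j=0}^{m-1}\hat\mu(j)$; since $\hat\mu(0)=1/m$ and $\hat\mu(j)=\frac1m\E_{x\sim\mu}[\omega_m^{-jx}]$, and since $j\mapsto m-j$ permutes $\{1,\dots,m-1\}$, this rearranges to
\[
\sum_{j=1}^{m-1}\E_{x\sim\mu}\!\left[\omega_m^{jx}\right]=m\,\mu(0)-1 .
\]
Hence the hypothesis $\mu(0)\le\frac{1}{100m}$ yields $\bigl|\sum_{j=1}^{m-1}\E_{x\sim\mu}[\omega_m^{jx}]\bigr|\ge 1-m\mu(0)\ge\frac{99}{100}$.

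Now suppose, for contradiction, that $\bigl|\E_{x\sim\mu}[\omega_m^{jx}]\bigr|<\frac{1}{s_j f(s_j)}$ for every $j\in\{1,\dots,m-1\}$, where $s_j=m/\gcd(j,m)$ denotes the order of $\omega_m^j$; we may assume $f$ is positive on the orders that occur, since otherwise $\frac{1}{s_j f(s_j)}\le 0$ and the conclusion is immediate. Applying the triangle inequality and then partitioning the indices $j$ by their order $s$ (a divisor of $m$ with $s\ge 2$), we get
\[
\frac{99}{100}\;\le\;\Bigl|\sum_{j=1}^{m-1}\E_{x\sim\mu}[\omega_m^{jx}]\Bigr|\;<\;\sum_{j=1}^{m-1}\frac{1}{s_j f(s_j)}\;=\;\sum_{\substack{s\mid m\\ s\ge 2}}\frac{\bigl|\{j:s_j=s\}\bigr|}{s\,f(s)} .
\]
The number-theoretic input is that the $j\in\{1,\dots,m-1\}$ of order $s$ are exactly those with $\gcd(j,m)=m/s$, and there are $\phi(s)$ of them; since $\phi(s)\le s$, the last sum is at most $\sum_{s\mid m,\, s\ge 2}\frac{1}{f(s)}\le\sum_{s=2}^{\infty}\frac{1}{f(s)}\le 0.99$, contradicting the bound $\frac{99}{100}$ above. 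This proves the lemma.

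The argument has no real obstacle — everything follows once one notices that evaluating \eqref{eq:fourier} at $x=0$ converts the ``$\mu(0)$ is too small'' hypothesis directly into a statement about $\sum_j \E_{x\sim\mu}[\omega_m^{jx}]$. The two points requiring a little care are keeping the middle inequality strict (legitimate since $m\ge 2$ makes the index set non-empty), so that $<\frac{99}{100}$ truly contradicts $\ge\frac{99}{100}$, and the clean cancellation $\phi(s)/s\le 1$, which is exactly what removes the factor $s$ from the denominator of $\frac{1}{s f(s)}$ and makes the hypothesis $\sum_{s\ge 2}1/f(s)\le 0.99$ precisely strong enough.
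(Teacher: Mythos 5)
Your proof is correct and follows essentially the same route as the paper's: evaluate the Fourier inversion formula at $x=0$ to turn the hypothesis $\mu(0)\le\frac{1}{100m}$ into the lower bound $\sum_{j=1}^{m-1}\bigl|\E_{x\sim\mu}[\omega_m^{jx}]\bigr|\ge 0.99$, then partition the indices $j$ by the order $s$ of $\omega_m^j$ and use $\varphi(s)\le s$ to contradict $\sum_{s\ge2}1/f(s)\le 0.99$. The only differences are cosmetic (your explicit handling of possibly non-positive $f$ and of the sign of the exponent via $j\mapsto m-j$).
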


\begin{proof}
By setting $x=0$ in~(\ref{eq:fourier}), we have
\[\mu(0)=\sum_{j=0}^{m-1}\hat{\mu}(j)\omega_m^{j\cdot0}=\sum_{j=0}^{m-1}\hat{\mu}(j)=\frac{1}{m}+\frac{1}{m}\sum_{j=1}^{m-1}\E_{x\sim\mu}[\omega_m^{-jx}].\]
Therefore
\begin{equation} \label{eq:biaslb}
\sum_{j=1}^{m-1}\left|\E_{x\sim\mu}[\omega_m^{jx}]\right| \geq \left|\sum_{j=1}^{m-1}\E_{x\sim\mu}[\omega_m^{jx}]\right| = \left|\sum_{j=1}^{m-1}\E_{x\sim\mu}[\omega_m^{-jx}]\right| = m\cdot\left|\mu(0)-\frac{1}{m}\right|\geq0.99.
\end{equation}

For every $d\mid m$ ($1\leq d\leq m-1$), define $T_d=\{j\mid\gcd(j,m)=d,1\leq j\leq m-1\}$. For all $j\in T_d$, the order of $\omega_m^j$ is $s_d=\frac{m}{d}$ ($2\leq s_d\leq m$). We also see $T_d=\{k\cdot d\mid1\leq k<s_d, \gcd(k,s_d)=1\}$, hence $|T_d|=\varphi(s_d)<s_d$.

If the lemma was not true, we have
\[\sum_{j=1}^{m-1}\left|\E_{x\sim\mu}[\omega_m^{jx}]\right|=\sum_{d\mid m \atop d<m}\left(\sum_{j\in T_d}\left|\E_{x\sim\mu}[\omega_m^{jx}]\right|\right)<\sum_{d\mid m \atop d<m}\left(s_d\cdot\frac{1}{s_df(s_d)}\right)<\sum_{s=2}^{\infty}\frac{1}{f(s)}\leq0.99.\]
This violates inequality~(\ref{eq:biaslb}). Thus the lemma is proved.
\end{proof}

\subsection{Notations and Facts about MV Families}

We use $\langle\cdot,\cdot\rangle$ to denote the inner product over $\mathbb{Z}$ between two vectors. In all calculations, we identify $\mathbb{Z}_m$ as $\{0,1,\ldots,m-1\}$ and treat the numbers as on $\mathbb{Z}$. Conventionally, we consider $a\bmod 1$ to be $0$ for any integer $a$.

\begin{notation}
Let $r$ be a positive integer. For an integer $v$, define $\vmod{v}{r}\in\{0,1,\ldots,r-1\}$ to be $v$ modulo $r$. For a vector $\vec{v}=(v_1,v_2,\ldots,v_n)$, define $\vmod{\vec{v}}{r}=(\vmod{v_1}{r},\vmod{v_2}{r},\ldots,\vmod{v_n}{r})$. For a list of vectors $V=(\vec{v}_1,\vec{v}_2,\ldots,\vec{v}_t)$, define $\vmod{V}{r}=(\vmod{\vec{v}_1}{r},\vmod{\vec{v}_2}{r},\ldots,\vmod{\vec{v}_t}{r})$.
\end{notation}

\begin{notation}
Let $r$ be a positive integer. For an integer $v$, define $\vdiv{v}{r}\in\mathbb{Z}$ to be $(v-\vmod{v}{r})/r$. For a vector $\vec{v}=(v_1,v_2,\ldots,v_n)$, define $\vdiv{\vec{v}}{r}=(\vdiv{v_1}{r},\vdiv{v_2}{r},\ldots,\vdiv{v_n}{r})$. Thus $\vec{v}=r\vdiv{\vec{v}}{r}+\vmod{\vec{v}}{r}$ for any vector $\vec{v}$. For a list of vectors $V=(\vec{v}_1,\vec{v}_2,\ldots,\vec{v}_t)$, define $\vdiv{V}{r}=(\vdiv{\vec{v}_1}{r},\vdiv{\vec{v}_2}{r},\ldots,\vdiv{\vec{v}_t}{r})$.
\end{notation}

\begin{define}
Let $U=(\vec{u}_1,\vec{u}_2,\ldots,\vec{u}_t)$ and $V=(\vec{v}_1,\vec{v}_2,\ldots,\vec{v}_t)$ be two lists of vectors in $\mathbb{Z}_m^n$. $(U,V)$ is a {\em matching vector family} if $\langle\vec{u}_i,\vec{v}_i\rangle\equiv0\pmod m$ for all $i\in[t]$ and $\langle\vec{u}_i,\vec{v}_j\rangle\not\equiv0\pmod m$ for all $i\neq j\in[t]$. The number $t$ is the size of the MV family and is denoted by $|(U,V)|$.
\end{define}

\begin{claim} \label{claim:twinfree}
For an MV family $(U,V)$ where $U=(\vec{u}_1,\vec{u}_2,\ldots,\vec{u}_t)$, $V=(\vec{v}_1,\vec{v}_2,\ldots,\vec{v}_t)$ and $i\neq j\in[t]$, we have $\vec{u}_i\neq\vec{u}_j$ and $\vec{v}_i\neq\vec{v}_j$.
\end{claim}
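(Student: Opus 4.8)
The plan is to argue by contradiction using the defining property of an MV family: diagonal inner products vanish mod $m$ while off-diagonal ones do not. Both halves of the claim follow from the same one-line observation, just applied to $U$ and to $V$ respectively.

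First I would suppose $\vec{u}_i = \vec{u}_j$ for some $i \neq j \in [t]$. Pairing both sides with $\vec{v}_i$ gives $\langle \vec{u}_i, \vec{v}_i \rangle = \langle \vec{u}_j, \vec{v}_i \rangle$ as integers, hence also mod $m$. But the left-hand side is a diagonal pair, so $\langle \vec{u}_i, \vec{v}_i \rangle \equiv 0 \pmod m$, whereas the right-hand side is an off-diagonal pair (since $j \neq i$), so $\langle \vec{u}_j, \vec{v}_i \rangle \not\equiv 0 \pmod m$. This is a contradiction, so $\vec{u}_i \neq \vec{u}_j$.

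Symmetrically, supposing $\vec{v}_i = \vec{v}_j$ with $i \neq j$ and pairing with $\vec{u}_i$ yields $\langle \vec{u}_i, \vec{v}_i \rangle = \langle \vec{u}_i, \vec{v}_j \rangle$; the left side is $\equiv 0 \pmod m$ and the right side is $\not\equiv 0 \pmod m$ because $i \neq j$, again a contradiction. Hence $\vec{v}_i \neq \vec{v}_j$. There is no real obstacle here — the only thing to be mildly careful about is noting that equality of vectors over $\mathbb{Z}_m^n$ forces equality of the resulting inner products mod $m$, which is immediate, and that the roles of the two coordinates of a pair $(i,j)$ are exactly what distinguish the "diagonal vanishes / off-diagonal does not" dichotomy.
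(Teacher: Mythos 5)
Your proof is correct and is essentially the paper's own argument: assume two $\vec{u}$'s coincide, observe that an off-diagonal inner product then equals a diagonal one and hence vanishes mod $m$, contradicting the MV property, and argue symmetrically for $V$. Nothing further is needed.
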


\begin{proof}
Assume $\vec{u}_i=\vec{u}_j$ for $i\neq j$, we have $\langle\vec{u}_i,\vec{v}_j\rangle=\langle\vec{u}_i,\vec{v}_i\rangle\equiv0\pmod m$. This violates the definition of MV family.
\end{proof}

\begin{notation}
Let $U,V,U',V'$ be 4 lists of vectors in $\mathbb{Z}_m^n$, and say $U=(\vec{u}_1,\vec{u}_2,\ldots,\vec{u}_t)$, $V=(\vec{v}_1,\vec{v}_2,\ldots,\vec{v}_t)$. We write $(U',V')\subseteq(U,V)$ if there exists a set $T\subseteq[t]$ such that $U'=(\vec{u}_i:i\in T)$ and $V'=(\vec{v}_i:i\in T)$. Observe that if $(U,V)$ is an MV family, so is $(U',V')$.
\end{notation}

\begin{define}
$(r_1,r_2,r_3)$ is a {\em partition} of $m$ if $r_1,r_2,r_3\in\mathbb{Z}^+$ and $r_1r_2r_3=m$. ($r_1,r_2,r_3$ are not assumed to be coprime.)
\end{define}

\begin{define}
For an MV family $(U,V)$ where $U=(\vec{u}_1,\vec{u}_2,\ldots,\vec{u}_t)$ and $V=(\vec{v}_1,\vec{v}_2,\ldots,\vec{v}_t)$, we say $(U,V)$ {\em respects} $(r_1,r_2,r_3)$, where $(r_1,r_2,r_3)$ is a partition of $m$, if the following conditions are satisfied:
\begin{enumerate}
\item $\exists\vec{u}_0\in\mathbb{Z}_{r_1}^n$ such that $\vmod{\vec{u}_i}{r_1}=\vec{u}_0$ for all $i\in[t]$,
\item $\exists\vec{v}_0\in\mathbb{Z}_{r_2}^n$ such that $\vmod{\vec{v}_i}{r_2}=\vec{v}_0$ for all $i\in[t]$,
\item $\langle\vdiv{\vec{u}_i}{r_1},\vec{v}_0\rangle$ modulo $r_2$ is the same for all $i\in[t]$,
\item $\langle\vec{u}_0,\vdiv{\vec{v}_i}{r_2}\rangle$ modulo $r_1$ is the same for all $i\in[t]$.
\end{enumerate}
\end{define}

\begin{claim} \label{claim:0diag}
If an MV family $(U,V)$ respects $(r_1,r_2,r_3)$, then $\langle\vec{u}_i,\vec{v}_j\rangle\equiv0\pmod{r_1r_2}$ for all $\vec{u}_i\in U, \vec{v}_j\in V$.
\end{claim}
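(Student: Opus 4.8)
The plan is to expand the inner product $\langle\vec{u}_i,\vec{v}_j\rangle$ along the partition $(r_1,r_2,r_3)$ and show that, reduced modulo $r_1r_2$, it does not depend on the pair $(i,j)$ at all; the value on the diagonal $i=j$, which is $0$ because $(U,V)$ is an MV family and $r_1r_2\mid m$, then pins down the common value to be $0$.

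Concretely, first I would use conditions 1 and 2 of ``respects'' to write $\vec{u}_i=r_1\vdiv{\vec{u}_i}{r_1}+\vec{u}_0$ and $\vec{v}_j=r_2\vdiv{\vec{v}_j}{r_2}+\vec{v}_0$, and expand bilinearly:
\[
\langle\vec{u}_i,\vec{v}_j\rangle
= r_1r_2\langle\vdiv{\vec{u}_i}{r_1},\vdiv{\vec{v}_j}{r_2}\rangle
+ r_1\langle\vdiv{\vec{u}_i}{r_1},\vec{v}_0\rangle
+ r_2\langle\vec{u}_0,\vdiv{\vec{v}_j}{r_2}\rangle
+ \langle\vec{u}_0,\vec{v}_0\rangle .
\]
The first summand is a multiple of $r_1r_2$, so it suffices to control the remaining three modulo $r_1r_2$.

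Second, I would argue that the sum of the last three terms is a single constant modulo $r_1r_2$, independent of $i$ and $j$. By condition 3, the integers $\langle\vdiv{\vec{u}_i}{r_1},\vec{v}_0\rangle$ all agree modulo $r_2$; hence for any $i,i'$ their difference is $k r_2$ for some integer $k$, and multiplying by $r_1$ makes the difference a multiple of $r_1r_2$, so $r_1\langle\vdiv{\vec{u}_i}{r_1},\vec{v}_0\rangle$ is constant modulo $r_1r_2$. Symmetrically, condition 4 makes $r_2\langle\vec{u}_0,\vdiv{\vec{v}_j}{r_2}\rangle$ constant modulo $r_1r_2$, and $\langle\vec{u}_0,\vec{v}_0\rangle$ is manifestly independent of $(i,j)$. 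Therefore $\langle\vec{u}_i,\vec{v}_j\rangle\bmod r_1r_2$ equals a fixed constant $c$ for all $i,j$.

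Finally, I would evaluate at $i=j$: since $(U,V)$ is an MV family, $\langle\vec{u}_i,\vec{v}_i\rangle\equiv0\pmod m$, and as $r_1r_2\mid m$ this forces $c\equiv0\pmod{r_1r_2}$, so $\langle\vec{u}_i,\vec{v}_j\rangle\equiv0\pmod{r_1r_2}$ for every $i,j$. This argument is entirely elementary; the only point that needs a moment of care — and the closest thing to an ``obstacle'' — is precisely the observation in the second step that a quantity which is constant modulo $r_2$ (resp.\ modulo $r_1$) becomes, after scaling by $r_1$ (resp.\ $r_2$), constant modulo $r_1r_2$, which is exactly why conditions 3 and 4 are stated with those particular moduli.
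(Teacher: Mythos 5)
Your proposal is correct and follows essentially the same route as the paper: expand $\langle\vec{u}_i,\vec{v}_j\rangle$ bilinearly using the decompositions $\vec{u}_i=r_1\vdiv{\vec{u}_i}{r_1}+\vec{u}_0$ and $\vec{v}_j=r_2\vdiv{\vec{v}_j}{r_2}+\vec{v}_0$, note that conditions 3 and 4 make the cross terms constant modulo $r_1r_2$ after scaling, and then use the diagonal case $i=j$ to pin the common value to $0$. No gaps; the step you flag as needing care is exactly the one the paper relies on as well.
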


\begin{proof}
Let $\vec{u}_0=\vmod{\vec{u}_i}{r_1}$ and $\vec{v}_0=\vmod{\vec{v}_j}{r_2}$. They are fixed for all $\vec{u}_i\in U$ and $\vec{v}_j\in V$. We have
\begin{eqnarray*}
\langle\vec{u}_i,\vec{v}_j\rangle & = & \langle r_1\vdiv{\vec{u}_i}{r_1}+\vec{u}_0,r_2\vdiv{\vec{v}_j}{r_2}+\vec{v}_0\rangle \\
& = & r_1r_2\langle\vdiv{\vec{u}_i}{r_1},\vdiv{\vec{v}_j}{r_2}\rangle+r_1\langle\vdiv{\vec{u}_i}{r_1},\vec{v}_0\rangle+r_2\langle\vec{u}_0,\vdiv{\vec{v}_j}{r_2}\rangle+\langle\vec{u}_0,\vec{v}_0\rangle.
\end{eqnarray*}
The first term is $0$ modulo $r_1r_2$. The second term is fixed modulo $r_1r_2$ because $\langle\vdiv{\vec{u}_i}{r_1},\vec{v}_0\rangle$ is fixed modulo $r_2$. Similarly, the third term is also a constant modulo $r_1r_2$. Therefore $\langle\vec{u}_i,\vec{v}_j\rangle$ modulo $r_1r_2$ is the same for all $\vec{u}_i\in U$ and $\vec{v}_j\in V$. Note that when $i=j$, $\langle\vec{u}_i,\vec{v}_j\rangle\equiv0\pmod{r_1r_2}$ since $(U,V)$ is an MV family. Therefore $\langle\vec{u}_i,\vec{v}_j\rangle\equiv0\pmod{r_1r_2}$ for all $\vec{u}_i\in U, \vec{v}_j\in V$.
\end{proof}

\begin{claim} \label{claim:base}
Every MV family $(U,V)$ respects $(1,1,m)$.
\end{claim}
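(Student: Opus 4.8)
The statement to prove is Claim~\ref{claim:base}: every MV family $(U,V)$ respects the partition $(1,1,m)$. This should be nearly immediate from unwinding the definition of ``respects'' with $r_1 = r_2 = 1$ and $r_3 = m$.

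\medskip

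The plan is to simply verify all four conditions in the definition of ``respects'' for the specific choice $r_1 = 1$, $r_2 = 1$, $r_3 = m$. First I would note that $(1,1,m)$ is indeed a partition of $m$ since $1 \cdot 1 \cdot m = m$ and all three factors are positive integers. For condition~1, reduction modulo $r_1 = 1$ sends every vector to the zero vector in $\mathbb{Z}_1^n$ (recalling the paper's convention that $a \bmod 1 = 0$), so taking $\vec{u}_0 = \vec{0} \in \mathbb{Z}_1^n$ works: $\vmod{\vec{u}_i}{1} = \vec{0} = \vec{u}_0$ for all $i \in [t]$. Condition~2 is symmetric: take $\vec{v}_0 = \vec{0} \in \mathbb{Z}_1^n$, and then $\vmod{\vec{v}_i}{1} = \vec{0} = \vec{v}_0$ for all $i$.

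\medskip

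For conditions~3 and~4, observe that the inner products in question are being reduced modulo $r_2 = 1$ and $r_1 = 1$ respectively. By the convention that any integer modulo $1$ equals $0$, the quantity $\langle \vdiv{\vec{u}_i}{1}, \vec{v}_0 \rangle \bmod 1$ equals $0$ for every $i$, hence is the same (namely $0$) for all $i \in [t]$; this gives condition~3. Likewise $\langle \vec{u}_0, \vdiv{\vec{v}_i}{1} \rangle \bmod 1 = 0$ for all $i$, giving condition~4. Since all four conditions hold, $(U,V)$ respects $(1,1,m)$.

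\medskip

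There is essentially no obstacle here — the claim is a base case that initializes the iterative argument, and everything follows from the degenerate behavior of arithmetic modulo $1$ together with the paper's stated convention. The only thing to be careful about is citing the convention ``$a \bmod 1 = 0$'' explicitly so that conditions~1 and~2 (which require the reduced vectors to literally be equal as vectors over $\mathbb{Z}_1^n$) are unambiguous.

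\begin{proof}
We check that $(U,V)$ respects the partition $(1,1,m)$ of $m$. Since reduction modulo $1$ sends every integer (and hence every vector) to $0$, we may take $\vec{u}_0=\vec{0}\in\mathbb{Z}_1^n$ and $\vec{v}_0=\vec{0}\in\mathbb{Z}_1^n$; then $\vmod{\vec{u}_i}{1}=\vec{u}_0$ and $\vmod{\vec{v}_i}{1}=\vec{v}_0$ for all $i\in[t]$, establishing conditions~1 and~2. For conditions~3 and~4, both inner products are taken modulo $1$, and by convention every integer modulo $1$ equals $0$; hence $\langle\vdiv{\vec{u}_i}{1},\vec{v}_0\rangle$ modulo $1$ and $\langle\vec{u}_0,\vdiv{\vec{v}_i}{1}\rangle$ modulo $1$ are both equal to $0$ for every $i\in[t]$, so in particular they are constant over $i$. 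Thus $(U,V)$ respects $(1,1,m)$.
\end{proof}
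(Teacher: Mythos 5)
Your proof is correct and follows the same route as the paper, which simply sets $\vec{u}_0$ and $\vec{v}_0$ to be the zero vector and notes that all conditions hold; you merely spell out the modulo-$1$ conventions more explicitly.
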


\begin{proof}
Let $\vec{u}_0$ and $\vec{v}_0$ be the zero vector. All the conditions are satisfied.
\end{proof}

\begin{claim} \label{claim:goal}
If an MV family $(U,V)$ respects $(r_1,r_2,1)$, then it must has size $1$.
\end{claim}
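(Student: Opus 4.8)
The plan is to obtain Claim~\ref{claim:goal} as an immediate consequence of Claim~\ref{claim:0diag}. Since $(U,V)$ respects the partition $(r_1,r_2,1)$, the definition of a partition gives $r_1 r_2 \cdot 1 = m$, that is, $r_1 r_2 = m$. Applying Claim~\ref{claim:0diag} to this partition yields $\langle \vec{u}_i, \vec{v}_j \rangle \equiv 0 \pmod{r_1 r_2}$ for all $\vec{u}_i \in U$ and $\vec{v}_j \in V$; but $r_1 r_2 = m$, so in fact $\langle \vec{u}_i, \vec{v}_j \rangle \equiv 0 \pmod m$ for every pair $i,j \in [t]$, including the off-diagonal pairs $i \neq j$.

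Next I would contrast this with the defining property of an MV family: for every $i \neq j \in [t]$ one must have $\langle \vec{u}_i, \vec{v}_j \rangle \not\equiv 0 \pmod m$. Together with the previous paragraph, the existence of any pair $i \neq j$ in $[t]$ is contradictory, so no such pair exists, i.e. $t \leq 1$. Since an MV family is nonempty by convention (it contains at least the pair $\vec{u}_1,\vec{v}_1$), we conclude $t = 1$, which is exactly the claim.

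There is no genuine obstacle here: the statement is a one-line corollary of Claim~\ref{claim:0diag} once one observes that the third component of the partition being $1$ forces $r_1 r_2 = m$. The only minor point to keep in mind is the degenerate case $t = 0$, which is ruled out (or harmlessly absorbed) by the standing convention that MV families are nonempty; alternatively one may simply read the conclusion as the bound $t \le 1$. This claim will serve as the base case / stopping condition for the iterative argument: the main theorem will be proved by repeatedly refining an MV family so that it respects partitions $(r_1,r_2,r_3)$ with $r_3$ shrinking (losing only a controlled factor in size at each step), and Claim~\ref{claim:goal} is what guarantees the process terminates in a trivial family once $r_3$ reaches $1$.
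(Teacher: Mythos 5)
Your proof is correct and follows exactly the paper's own argument: apply Claim~\ref{claim:0diag} with $r_1r_2=m$ to get $\langle\vec{u}_i,\vec{v}_j\rangle\equiv0\pmod m$ for all pairs, which contradicts the MV-family condition for any $i\neq j$. The extra remark about the degenerate $t=0$ case is harmless and does not change the substance.
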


\begin{proof}
Since $r_1r_2=m$, by Claim~\ref{claim:0diag} we have $\langle\vec{u}_i,\vec{v}_j\rangle\equiv0\pmod{m}$ for all $\vec{u}_i\in U, \vec{v}_j\in V$. By the definition of MV family, the size of $(U,V)$ must be $1$.
\end{proof}

\section{Proof of the Main Lemma}\label{sec-prooflem}

Consider an MV family $(U,V)$, where $U=(\vec{u}_1,\vec{u}_2,\ldots,\vec{u}_t)$ and $V=(\vec{v}_1,\vec{v}_2,\ldots,\vec{v}_t)$. We pick $\vec{u}\in U$ and $\vec{v}\in V$ uniformly at random and consider the distribution of $\vmod{\langle\vec{u},\vec{v}\rangle}{m}$. The inner product is 0 with probability $1/t$. Thus the distribution is far from uniform when $t>>m$. We will take advantage of this fact and prove our key lemma. For an MV family $(U,V)$ respecting $(r_1,r_2,r_3)$, we can find a large subfamily and reduce $r_3$ to some smaller number.

Let $f:\mathbb{Z}^+\mapsto\mathbb{R}$ be a function satisfying $\sum_{s=2}^{\infty}\frac{1}{f(s)}\leq0.99$. We will specify $f(s)$ in later proofs.

\begin{lem} \label{lem:main}
If an MV family $(U,V)$ respects $(r_1,r_2,r_3)$ for some $r_3\geq2$ and $|(U,V)|=t\geq100m$, then there exists $s\mid r_3$ with $s\geq2$ and an MV family $(U',V')\subseteq(U,V)$ with $|(U',V')|\geq t/(s^{n/2+4}f(s)^2)$ that respects either $(r_1s,r_2,r_3/s)$ or $(r_1,r_2s,r_3/s)$.
\end{lem}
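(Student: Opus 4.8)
The plan is to exploit the non-uniformity of the distribution $\mu$ of $\langle\vec u,\vec v\rangle \bmod r_3$ (for $\vec u\in U,\vec v\in V$ chosen uniformly) via Lemma~\ref{lem:bias}, and then use the resulting large Fourier coefficient to carve out a large subfamily that respects a finer partition. Concretely, since $(U,V)$ respects $(r_1,r_2,r_3)$, Claim~\ref{claim:0diag} gives $\langle\vec u_i,\vec v_j\rangle\equiv 0\pmod{r_1r_2}$ for all $i,j$, so it is natural to look at the ``top part'' $w_{ij} := \langle\vec u_i,\vec v_j\rangle/(r_1 r_2)\bmod r_3$, which lies in $\Z_{r_3}$. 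The diagonal entries $w_{ii}$ all equal the common value forced by conditions (3),(4) of the ``respects'' definition together with $\langle\vec u_i,\vec v_i\rangle\equiv0\pmod m$; in fact one checks this common value is $0$, so the distribution $\mu$ of $w_{ij}$ over random $i,j$ satisfies $\mu(0)\ge 1/t$, hence (since $t\ge 100m\ge 100 r_3$) $\mu(0)\ge 1/(100 r_3)$ is \emph{not} what we want — rather we want $\mu(0)$ small to apply the lemma. Here the point is the opposite: $\mu(0)\ge 1/t$ is \emph{large} compared to... no: we need $\mu(0)\le \tfrac1{100 r_3}$. Since $t\ge 100m$ and $r_3\mid m$, we have $1/t \le 1/(100m)\le 1/(100 r_3)$, but $\mu(0)$ counts \emph{all} pairs with $w_{ij}=0$, not just diagonal ones, so this needs a short argument: either $\mu(0)\le\tfrac1{100 r_3}$ and we proceed, or $\mu(0)>\tfrac1{100 r_3}$ and a large subfamily has a constant ``top part,'' which we handle separately (pigeonhole on the value of $w_{ij}$ restricted to a single column $j$, or iterate). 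I expect the cleaner route is: apply Lemma~\ref{lem:bias} to $\mu$ (possibly after first passing to a subfamily where $\mu(0)$ is small), obtaining $j_0$ with $s := r_3/\gcd(j_0,r_3)\ge 2$, $s\mid r_3$, and $|\E[\omega_{r_3}^{j_0 w}]|\ge 1/(s f(s))$.

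\textbf{From the large character to a subfamily.} Writing $\E_{i,j}[\omega_{r_3}^{j_0 w_{ij}}] = \E_i\E_j[\omega_s^{k_0 \langle \vdiv{\vec u_i}{r_1 r_2},\, \cdot\,\rangle}\cdots]$ — more precisely, $\omega_{r_3}^{j_0 w_{ij}}$ depends on $i$ and $j$ through a bilinear-type expression — I would expand the expectation as $\E_i[a_i]\cdot$ (something) is not bilinear, so instead use $|\E_{i,j}[\omega_{r_3}^{j_0 w_{ij}}]| = |\E_i \E_j[\chi(\vec u_i,\vec v_j)]|$ where $\chi$ factors: $w_{ij}$ modulo $s$ is an additive function of a linear form in $\vdiv{\vec u_i}{\cdot}$ paired against $\vmod{\vec v_j}{\cdot}$-data plus the symmetric term. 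The standard move (as in \cite{BDL13}) is Cauchy–Schwarz / convexity: there exists a fixed ``phase'' such that at least a $1/s^2$ (roughly $1/(sf(s))^2$-proportional, but we want $s$-power savings) fraction of the $\vec u_i$'s share one value of their relevant linear form modulo $s$ and similarly a $1/s$-fraction on the $\vec v_j$ side, giving $\ge t/(s\cdot s) $ indices after fixing both; combined with fixing $\vmod{\vec u_i}{r_1 s}$ and $\vmod{\vec v_j}{r_2 s}$ (another $s^{n/2}$-type loss split between the two sides — this is where the $s^{n/2}$ in the bound comes from, exactly as the $m^{n/2}$ global bound arises) we land in a subfamily respecting $(r_1 s, r_2, r_3/s)$ or $(r_1, r_2 s, r_3/s)$. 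The bookkeeping must be arranged so the total loss is $s^{n/2+4} f(s)^2$: the $f(s)^2$ from squaring the $1/(sf(s))$ bias, the $s^{n/2}$ from fixing one new ``digit'' of each coordinate on the chosen side (distributed as $s^{n/2}$ by an averaging argument that splits the $n$ coordinates), and a bounded power $s^{O(1)}$ absorbed into $s^4$ from fixing the finitely many scalar congruence conditions (3),(4) for the new partition.

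\textbf{Main obstacle.} The technical heart — and where I expect to spend the most care — is verifying that after fixing the character data one genuinely obtains a family that \emph{respects} the finer partition, i.e.\ that all four conditions of the ``respects'' definition hold for $(r_1 s, r_2, r_3/s)$ (resp.\ $(r_1, r_2 s, r_3/s)$), and that the loss can be bundled as $s^{n/2+4}f(s)^2$ rather than $s^{n/2+O(\log s)}$ or worse. This requires choosing \emph{which} side ($\vec u$ or $\vec v$) to refine based on the structure of $j_0$ (its gcd with $r_3$ relative to $r_1,r_2$), and an averaging argument showing the $n$ new coordinate-digits can be fixed at an average cost of $s^{1/2}$ each rather than $s$ each — exploiting that the diagonal inner-product constraint $\langle\vec u_i,\vec v_i\rangle\equiv 0$ ties the $\vec u$-digits to the $\vec v$-digits, halving the effective dimension. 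Handling the ``large $\mu(0)$'' case (when many \emph{off-diagonal} top-parts already vanish) cleanly — by observing such a subfamily respects a partition with larger $r_1 r_2$ directly, via Claim~\ref{claim:0diag}-type reasoning with a pigeonhole over the common top-part value — is the secondary subtlety. With $f$ chosen so that $\sum_{s\ge 2} 1/f(s)\le 0.99$ (e.g.\ $f(s) = s^2$ or $f(s)\asymp s\log^2 s$, to be optimized in Section~\ref{sec-proofthm}), Lemma~\ref{lem:bias} applies and the iteration terminates once $r_3 = 1$ (Claim~\ref{claim:goal}).
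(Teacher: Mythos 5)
Your high-level architecture does match the paper's (Fourier bias of the ``top part'' $\langle\vec{u},\vec{v}\rangle/(r_1r_2)\bmod r_3$, then bucketing and Cauchy--Schwarz to extract a dense subfamily respecting a finer partition), but two load-bearing points are missing or wrong. First, your case analysis on $\mu(0)$ is a phantom, and the branch you leave unresolved is exactly the one you cannot afford to leave unresolved. Since $r_1r_2r_3=m$, we have $\langle\vec{u}_i,\vec{v}_j\rangle/(r_1r_2)\equiv0\pmod{r_3}$ if and only if $\langle\vec{u}_i,\vec{v}_j\rangle\equiv0\pmod m$, which by the MV property holds if and only if $i=j$. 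So there are no off-diagonal pairs contributing to $\mu(0)$: one gets $\mu(0)=1/t\leq1/(100m)\leq1/(100r_3)$ exactly, and Lemma~\ref{lem:bias} applies directly. Your fallback (``a large subfamily has a constant top part, handled by pigeonhole'') is both unnecessary and unsubstantiated --- you give no argument that it stays within the claimed loss $s^{n/2+4}f(s)^2$.

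Second, the mechanism producing $s^{n/2}$ rather than $s^n$ is not the one you describe. It is not an averaging argument fixing coordinates at ``cost $s^{1/2}$ each,'' nor does the diagonal constraint ``halve the effective dimension.'' The paper first symmetrizes: $\E_{\vec{u},\widetilde{\vec{u}},\vec{v}}[\chi(\langle\vec{u}-\widetilde{\vec{u}},\vec{v}\rangle)]=\E_{\vec{v}}\left|\E_{\vec{u}}[\cdots]\right|^2\geq1/(s^2f(s)^2)$, fixing a reference $\widetilde{\vec{u}}$; this is needed so that the phase becomes an honest function of the digit vector $\vmod{(\vdiv{\vec{u}}{r_1}-\vdiv{\widetilde{\vec{u}}}{r_1})}{s}\in\Z_s^n$, a step absent from your sketch. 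Bucketing $U$ by these digit vectors and $V$ by $\vmod{(\vdiv{\vec{v}}{r_2})}{s}$, then applying Cauchy--Schwarz and character orthogonality over $\Z_s^n$, gives $\bigl(\sum_{\vec{w}}p_{\vec{w}}^2\bigr)\bigl(\sum_{\vec{w}}q_{\vec{w}}^2\bigr)\cdot s^n\geq1/(s^2f(s)^2)^2$, so \emph{one} of the two collision probabilities is at least $1/(s^{n/2+2}f(s)^2)$; since $\max_{\vec{w}}p_{\vec{w}}\geq\sum_{\vec{w}}p_{\vec{w}}^2$, a single bucket on that side has the required density. This is also what decides whether you end up respecting $(r_1s,r_2,r_3/s)$ or $(r_1,r_2s,r_3/s)$ --- it is determined by which side has the large collision probability, not by the ``structure of $j_0$'' relative to $r_1,r_2$ as you suggest. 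The final factor $s^2$ (giving $s^{n/2+4}$) comes from pigeonholing the two scalar congruence conditions at cost $\gcd(s,r_2)\cdot s\leq s^2$, which you did anticipate. As written, your accounting would not assemble into the stated bound without these corrections.
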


\begin{proof}
We prove the lemma in 4 steps.

\subsubsection*{Step 1: Finding a nice character with a large bias.}
By Claim~\ref{claim:0diag}, $\frac{\langle\vec{u},\vec{v}\rangle}{r_1r_2}$ is an integer for all $\vec{u}\in U,\vec{v}\in V$. We can also see $\frac{\langle\vec{u},\vec{v}\rangle}{r_1r_2}\equiv0\pmod{r_3}$ iff $\langle\vec{u},\vec{v}\rangle\equiv0\pmod m$. Consider the distribution of $\vmod{\left(\frac{\langle\vec{u},\vec{v}\rangle}{r_1r_2}\right)}{r_3}\in\mathbb{Z}_{r_3}$, where $\vec{u}$ and $\vec{v}$ are uniformly drawn from $U$ and $V$ respectively. We have
\[\Pr\left[\vmod{\left(\frac{\langle\vec{u},\vec{v}\rangle}{r_1r_2}\right)}{r_3}=0\right]=\Pr\Big[\langle\vec{u},\vec{v}\rangle\equiv0\pmod m\Big]=\frac{1}{t}\leq\frac{1}{100m}\leq\frac{1}{100r_3}.\]
Applying Lemma~\ref{lem:bias} on $\mathbb{Z}_{r_3}$, there exists a $j\in\{1,2,\ldots,r_3-1\}$ such that
\begin{equation} \label{eq:start}
\left|\E_{\vec{u}\sim U \atop \vec{v}\sim V}\left[\omega_{r_3}^{j\frac{\langle\vec{u},\vec{v}\rangle}{r_1r_2}}\right]\right|\geq\frac{1}{sf(s)},
\end{equation}
where $\omega_{r_3}=e^{\frac{2\pi i}{r_3}}$ and $s=\frac{r_3}{\gcd(j,r_3)}$ is the order of $\omega_{r_3}^j$. Note that we have dropped the modulo $r_3$ operation because $(\omega_{r_3}^j)^{r_3}=1$. It follows that
\[\E_{\vec{u},\widetilde{\vec{u}}\sim U \atop \vec{v}\sim V}\left[\omega_{r_3}^{j\frac{\langle\vec{u}-\widetilde{\vec{u}},\vec{v}\rangle}{r_1r_2}}\right]=\E_{\vec{v}\sim V}\left|\E_{\vec{u}\sim U}\left[\omega_{r_3}^{j\frac{\langle\vec{u},\vec{v}\rangle}{r_1r_2}}\right]\right|^2\geq\left|\E_{\vec{u}\sim U \atop \vec{v}\sim V}\left[\omega_{r_3}^{j\frac{\langle\vec{u},\vec{v}\rangle}{r_1r_2}}\right]\right|^2\geq\frac{1}{s^2f(s)^2}.\]
Therefore there exists a fixed $\widetilde{\vec{u}}\in U$ such that
\[\left|\E_{\vec{u}\sim U \atop \vec{v}\sim V}\left[\omega_{r_3}^{j\frac{\langle\vec{u}-\widetilde{\vec{u}},\vec{v}\rangle}{r_1r_2}}\right]\right|=\left|\E_{\vec{u}\sim U \atop \vec{v}\sim V}\left[\omega_{r_3}^{j\langle\frac{\vec{u}-\widetilde{\vec{u}}}{r_1},\vec{v}\rangle/r_2}\right]\right|\geq\frac{1}{s^2f(s)^2}.\]
Since $\vmod{\vec{u}}{r_1}=\vmod{\widetilde{\vec{u}}}{r_1}$, we have $\vec{u}-\widetilde{\vec{u}}=r_1(\vdiv{\vec{u}}{r_1}-\vdiv{\widetilde{\vec{u}}}{r_1})$. The above inequality can be written as 
\begin{equation} \label{eq:step1}
\left|\E_{\vec{u}\sim U \atop \vec{v}\sim V}\left[\omega_{r_3}^{j\langle\vdiv{\vec{u}}{r_1}-\vdiv{\widetilde{\vec{u}}}{r_1},\vec{v}\rangle/r_2}\right]\right|\geq\frac{1}{s^2f(s)^2}.
\end{equation}

\subsubsection*{Step 2: Partitioning  into buckets.}

We partition the set $U$ into buckets according to $\vdiv{\vec{u}}{r_1}-\vdiv{\widetilde{\vec{u}}}{r_1}$ modulo $s$: $U=\bigcup\limits_{\vec{w}\in\mathbb{Z}_s^n}B(\vec{w},U)$, where
\[\widetilde{B}(\vec{w},U)=\left\{\vec{u}\in U\Bigm|\vmod{\left(\vdiv{\vec{u}}{r_1}-\vdiv{\widetilde{\vec{u}}}{r_1}\right)}{s}=\vec{w}\right\}.\]

We also partition $V$ into buckets $B(\vec{w},V)=\{\vec{v}\in V\mid\vmod{(\vdiv{\vec{v}}{r_2})}{s}=\vec{w}\}$ for all $\vec{w}\in\mathbb{Z}_s^n$. Define $p_{\vec{w}}=|\widetilde{B}(\vec{w},U)|/t$ to be the density of $\widetilde{B}(\vec{w},U)$ and $q_{\vec{w}}=|B(\vec{w},V)|/t$ be the density of $B(\vec{w},V)$.

Picking $\vec{u}$ uniformly from $U$ can be equivalently considered as two steps: 1.~For each bucket $\widetilde{B}(\vec{w},U)$, pick a representative $\vec{u}_{\vec{w}}\in\widetilde{B}(\vec{w},U)$ uniformly; 2.~Pick one bucket according to the probability distribution $p_{\vec{w}}$, and output the representative. For inequality~(\ref{eq:step1}), we split the procedure of picking $\vec{u}\sim U$ into these two steps.
\begin{eqnarray*}
\frac{1}{s^2f(s)^2} & \leq & \left|\E_{\vec{u}\sim U \atop \vec{v}\sim V}\left[\omega_{r_3}^{j\langle\vdiv{\vec{u}}{r_1}-\vdiv{\widetilde{\vec{u}}}{r_1},\vec{v}\rangle/r_2}\right]\right| \\
& = & \left|\E_{\text{for each }\vec{w}, \atop \vec{u}_{\vec{w}}\sim\widetilde{B}(\vec{w},U)}\E_{\vec{w}\sim p_{\vec{w}}}\E_{\vec{v}\sim V}\left[\omega_{r_3}^{j\langle\vdiv{\vec{u}_{\vec{w}}}{r_1}-\vdiv{\widetilde{\vec{u}}}{r_1},\vec{v}\rangle/r_2}\right]\right| \\
& \leq & \E_{\text{for each }\vec{w}, \atop \vec{u}_{\vec{w}}\sim\widetilde{B}(\vec{w},U)}\left|\E_{\vec{w}\sim p_{\vec{w}}}\E_{\vec{v}\sim V}\left[\omega_{r_3}^{j\langle\vdiv{\vec{u}_{\vec{w}}}{r_1}-\vdiv{\widetilde{\vec{u}}}{r_1},\vec{v}\rangle/r_2}\right]\right|.
\end{eqnarray*}

There exists a fixed list of representatives from each bucket $(\vec{u}_{\vec{w}}\in B(\vec{w},U):\vec{w}\in\mathbb{Z}_s^n)$ such that
\begin{equation} \label{eq:rep}
\frac{1}{s^2f(s)^2} \leq \left|\E_{\vec{w}\sim p_{\vec{w}} \atop \vec{v}\sim V}\left[\omega_{r_3}^{j\langle\vdiv{\vec{u}_{\vec{w}}}{r_1}-\vdiv{\widetilde{\vec{u}}}{r_1},\vec{v}\rangle/r_2}\right]\right|.
\end{equation}

For every $\vec{w}\in\mathbb{Z}_s^n$ and $\vec{u}\in B(\vec{w},U)$, we use $\vec{u}'$ to denote the vector $\vdiv{(\vdiv{\vec{u}}{r_1}-\vdiv{\widetilde{\vec{u}}}{r_1})}{s}$. Thus
\[\vdiv{\vec{u}_{\vec{w}}}{r_1}-\vdiv{\widetilde{\vec{u}}}{r_1}=s\vec{u}_{\vec{w}}'+\vmod{(\vdiv{\vec{u}_{\vec{w}}}{r_1}-\vdiv{\widetilde{\vec{u}}}{r_1})}{s}=s\vec{u}_{\vec{w}}'+\vec{w}.\]

Hence inequality~(\ref{eq:rep}) can be written as
\begin{equation} \label{eq:step2}
\frac{1}{s^2f(s)^2} \leq \left|\E_{\vec{w}\sim p_{\vec{w}} \atop \vec{v}\sim V}\left[\omega_{r_3}^{j\langle s\vec{u}_{\vec{w}}'+\vec{w},\vec{v}\rangle/r_2}\right]\right|.
\end{equation}

\subsubsection*{Step 3: Finding a large bucket.}

By inequality~(\ref{eq:step2}),
\begin{eqnarray}
\left(\frac{1}{s^2f(s)^2}\right)^2 & \leq & \left|\sum_{\vec{w}\in\mathbb{Z}_s^n}\sum_{\vec{v}\in V}p_{\vec{w}}\cdot\frac{1}{t}\cdot\omega_{r_3}^{j\langle s\vec{u}_{\vec{w}}'+\vec{w},\vec{v}\rangle/r_2}\right|^2 \nonumber \\
& \leq & \left(\sum_{\vec{w}\in\mathbb{Z}_s^n}p_{\vec{w}}^2\right)\cdot\left(\sum_{\vec{w}\in\mathbb{Z}_s^n}\left|\sum_{\vec{v}\in V}\frac{1}{t}\cdot\omega_{r_3}^{j\langle s\vec{u}_{\vec{w}}'+\vec{w},\vec{v}\rangle/r_2}\right|^2\right) \nonumber \\
& = & \left(\sum_{\vec{w}\in\mathbb{Z}_s^n}p_{\vec{w}}^2\right)\cdot\left(\sum_{\vec{w}\in\mathbb{Z}_s^n}\sum_{\vec{v},\widetilde{\vec{v}}\in V}\frac{1}{t^2}\cdot\omega_{r_3}^{j\langle s\vec{u}_{\vec{w}}'+\vec{w},\vec{v}-\widetilde{\vec{v}}\rangle/r_2}\right) \nonumber \\
& = & \left(\sum_{\vec{w}\in\mathbb{Z}_s^n}p_{\vec{w}}^2\right)\cdot\left(\sum_{\vec{v},\widetilde{\vec{v}}\in V}\frac{1}{t^2}\cdot\sum_{\vec{w}\in\mathbb{Z}_s^n}\omega_{r_3}^{j\langle s\vec{u}_{\vec{w}}'+\vec{w},\vdiv{\vec{v}}{r_2}-\vdiv{\widetilde{\vec{v}}}{r_2}\rangle}\right) \nonumber \\
& = & \left(\sum_{\vec{w}\in\mathbb{Z}_s^n}p_{\vec{w}}^2\right)\cdot\left(\sum_{\vec{v},\widetilde{\vec{v}}\in V}\frac{1}{t^2}\cdot\sum_{\vec{w}\in\mathbb{Z}_s^n}\omega_{r_3}^{j\langle\vec{w},\vdiv{\vec{v}}{r_2}-\vdiv{\widetilde{\vec{v}}}{r_2}\rangle}\right) \nonumber \\
& = & \left(\sum_{\vec{w}\in\mathbb{Z}_s^n}p_{\vec{w}}^2\right)\cdot\left(\sum_{\vec{v},\widetilde{\vec{v}}\in V}\frac{1}{t^2}\cdot s^n\cdot{\scalebox{1.3}{1}}_{\vdiv{\vec{v}}{r_2}\neq\vdiv{\widetilde{\vec{v}}}{r_2}}\right) \nonumber \\
& = & \left(\sum_{\vec{w}\in\mathbb{Z}_s^n}p_{\vec{w}}^2\right)\cdot\left(\sum_{\vec{w}\in\mathbb{Z}_s^n}q_{\vec{w}}^2\right)\cdot s^n. \label{eq:cp}
\end{eqnarray}
In the last step we used the fact $\vdiv{\vec{v}}{r_2}\neq\vdiv{\widetilde{\vec{v}}}{r_2}$ for two $\vec{v},\widetilde{\vec{v}}\in V$. This can be seen by contrapositive. If $\vdiv{\vec{v}}{r_2}=\vdiv{\widetilde{\vec{v}}}{r_2}$, we have $\vec{v}=\widetilde{\vec{v}}$ since $\vmod{\vec{v}}{r_2}=\vmod{\widetilde{\vec{v}}}{r_2}$. This contradicts Claim~\ref{claim:twinfree}.

By~(\ref{eq:cp}), we see either $\sum p_{\vec{w}}^2\geq1/(s^{n/2+2}f(s)^2)$ or $\sum q_{\vec{w}}^2\geq1/(s^{n/2+2}f(s)^2)$. Without loss of generality, assume $\sum p_{\vec{w}}^2\geq1/(s^{n/2+2}f(s)^2)$. By
\[\max\{p_{\vec{w}}\}=\max\{p_{\vec{w}}\}\cdot\sum p_{\vec{w}}\geq\sum p_{\vec{w}}^2,\]
there exists a bucket $\widetilde{B}(\vec{w}_0,U)$ with size at least $t/(s^{n/2+2}f(s)^2)$. Let $\widetilde{U}$ be that bucket, and $\widetilde{V}$ be the subset of $V$ with the same indices. Then $(\widetilde{U},\widetilde{V})\subseteq(U,V)$ is an MV family of size at least $t/(s^{n/2+2}f(s)^2)$. Next, we will find a subfamily $(U',V')\subseteq(\widetilde{U},\widetilde{V})$ that respects $(r_1s,r_2,r_3/s)$.

\subsubsection*{Step 4: Analyzing the elements in the large bucket.}

Let $\vec{u}_0$ and $\vec{v}_0$ denote $\vmod{\vec{u}}{r_1}$ and $\vmod{\vec{v}}{r_2}$ respectively for $\vec{u}\in U,\vec{v}\in V$. For every $\vec{u}\in \widetilde{U}$, we know $\vmod{(\vdiv{\vec{u}}{r_1}-\vdiv{\widetilde{\vec{u}}}{r_1})}{s}$  equals the same vector $\vec{w}_0$ by the definition of the bucket. Therefore $\vdiv{\vec{u}}{r_1}-\vdiv{\widetilde{\vec{u}}}{r_1}=s\vec{u}'+\vec{w}_0$ and
\begin{equation} \label{eq:expressu}
\vec{u}=r_1\vdiv{\vec{u}}{r_1}+\vec{u}_0=r_1(\vdiv{\widetilde{\vec{u}}}{r_1}+s\vec{u}'+\vec{w}_0)+\vec{u}_0=r_1s\vec{u}'+\left(r_1\vdiv{\widetilde{\vec{u}}}{r_1}+r_1\vec{w}_0+\vec{u}_0\right).
\end{equation}
We can see $\vmod{\vec{u}}{r_1s}=r_1\vdiv{\widetilde{\vec{u}}}{r_1}+r_1\vec{w}_0+\vec{u}_0$ is the same for all $\vec{u}\in \widetilde{U}$. Also $\vmod{\vec{v}}{r_2}=\vec{v}_0$ is the same for all $\vec{v}\in \widetilde{V}$. These two conditions are still satisfied for any subfamily of $(\widetilde{U},\widetilde{V})$. It suffices to find $(U',V')\subseteq(\widetilde{U},\widetilde{V})$ such that
\begin{itemize}
\item $\langle\vdiv{\vec{u}}{r_1s},\vec{v}_0\rangle$ modulo $r_2$ is the same for all $\vec{u}\in U'$. By~(\ref{eq:expressu}) we have $\vdiv{\vec{u}}{r_1s}=\vec{u}'$, so we need $\langle\vec{u}',\vec{v}_0\rangle$ modulo $r_2$ to be the same for all $\vec{u}\in U'$.
\item $\langle r_1\vdiv{\widetilde{\vec{u}}}{r_1}+r_1\vec{w}+\vec{u}_0,\vdiv{\vec{v}}{r_2}\rangle$ modulo $r_1s$ is the same for all $\vec{v}\in V'$.
\end{itemize}

Since $\langle\vdiv{\vec{u}}{r_1},\vec{v}_0\rangle=\langle s\vec{u}'+\vdiv{\widetilde{\vec{u}}}{r_1}+\vec{w},\vec{v}_0\rangle$ modulo $r_2$ is the same for all $\vec{u}\in U$ by $(U,V)$ respecting $(r_1,r_2,r_3)$, we can see that $s\langle\vec{u}',\vec{v}_0\rangle$ modulo $r_2$ is the same for all $\vec{u}\in U$. Hence there are $\gcd(s,r_2)$ possible values for $\langle\vec{u}',\vec{v}_0\rangle$ modulo $r_2$. We pick the most frequent value $c_1$ and keep only the vectors with $\langle\vec{u}',\vec{v}_0\rangle\equiv c_1\pmod{r_2}$ in $\widetilde{U}$ and the corresponding vectors in $\widetilde{V}$.

Since $\langle\vec{u}_0,\vdiv{\vec{v}}{r_2}\rangle$ modulo $r_1$ is the same for all $\vec{v}\in V$ by $(U,V)$ respecting $(r_1,r_2,r_3)$, we can see that there are $s$ possible values for $\langle\vec{u}_0,\vdiv{\vec{v}}{r_2}\rangle$ modulo $sr_1$. We pick the most frequent value $c_2$ and keep only the vectors with $\langle\vec{u}_0,\vdiv{\vec{v}}{r_2}\rangle\equiv c_2\pmod{sr_1}$ in $\widetilde{U}$ and the corresponding vectors in $\widetilde{V}$.

After the above two steps, the MV family has size at least
\begin{equation} \label{eq:last}
\frac{|(\widetilde{U},\widetilde{V})|}{\gcd(s,r_2)\cdot s}\geq\frac{|(\widetilde{U},\widetilde{V})|}{s^2}\geq\frac{t}{s^{n/2+4}f(s)^2}.
\end{equation}
And this is the required $(U',V')$.
\end{proof}

\section{Proof of Theorems~\ref{thm:main} and \ref{thm-MVcodes}}\label{sec-proofthm}

We now prove Theorem~\ref{thm:main} by repeatedly applying Lemma~\ref{lem:main}.

\begin{proof}[Proof of Theorem~\ref{thm:main}]
By Claim~\ref{claim:base}, $(U,V)$ is good with respect to $(1,1,m)$. Initially we set $r_1=1$, $r_2=1$ and $r_3=m$. By Lemma~\ref{lem:main}, we there is a subfamily that respects $(r_1',r_2',r_3')$, where $r_1'r_2'r_3'=m$ and $r_3'<m$. We repeatedly apply Lemma~\ref{lem:main}. Each round $r_3$ is reduced by some factor. We can continue this procedure until either $r_3=1$ or the size of the MV family becomes less than $100m$. For the case $r_3=1$, the size of the MV family is also less than $100m$ by Claim~\ref{claim:goal}. Say there are $k$ rounds, and in each round we divide $r_3$ by $s_1,s_2,\ldots,s_k$ respectively. We have $s_1s_2\cdots s_k\leq m$ and in the $i$th round ($i\in[k]$), the size of the MV family is decreased by a factor at most $s_i^{n/2+4}f(s_i)^2$. Therefore the original size is upper bounded by
\[|(U,V)|\leq100m\cdot\prod_{i=1}^k s_i^{n/2+4}f(s_i)^2\leq 100m\cdot m^{n/2+4}\cdot\prod_{i=1}^kf(s_i)^2=100m^{n/2+5}\prod_{i=1}^kf(s_i)^2.\]

Pick $f(s)=s^{1.735}$, we can verify $\sum_{s=2}^{\infty}\frac{1}{f(s)}\leq0.99$. Therefore $|(U,V)|\leq100m^{n/2+5}(m^{1.735})^2=100m^{n/2+8.47}$.
\end{proof}

Combining with the lower bound $m^{n-1+o_m(1)}$ proved in \cite{DGY11}, we can give a universal lower bound for the length of the MV code in \cite{DGY11}. This is a restatement of Theorem~\ref{thm-MVcodes} stated in the introduction.

\begin{cor}\label{cor-MVcodes}
Any MV code (as constructed in \cite{DGY11}) has encoding length at least $N>K^{\frac{19}{18}}$, where $K$ is the message length regardless of the query complexity.
\end{cor}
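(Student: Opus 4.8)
The plan is to derive Corollary~\ref{cor-MVcodes} by combining Theorem~\ref{thm:main} with the known parameters of MV codes. Recall that an MV code built from an MV family of size $t$ in $\Z_m^n$ encodes a $K$-symbol message with $K = t$ into a codeword of length $N = m^n$ (possibly with small polynomial overhead, which I will absorb into constants and the $o_m(1)$ terms). The strategy is purely arithmetic: express both $K$ and $N$ in terms of $m$ and $n$, use the upper bound on $t$ to control how large $K$ can be relative to $m^{n/2}$, and then lower-bound $N = m^n$ in terms of $K$.

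First I would invoke Theorem~\ref{thm:main} to get $K = t \leq \MV(m,n) \leq 100 m^{n/2 + 8.47}$, so that $m^{n/2} \geq (K/100)^{1} \cdot m^{-8.47}$, i.e. $m^{n} \geq (K/100)^2 \cdot m^{-16.94}$. This by itself is not quite enough: if $m$ is huge compared to $n$ the term $m^{-16.94}$ could in principle swamp the gain. So the second ingredient is to also use the \emph{lower} bound on $\MV(m,n)$ from \cite{DGY11}, namely $\MV(m,n) \geq m^{n-1+o_m(1)}$ — wait, more carefully, the relevant point is that for the code to be nontrivial (and for the regime where the construction is actually used) one has $m$ bounded polynomially in $K$, or equivalently $n = \Omega(\log K / \log m)$ with $m$ not too large; concretely, since $K \le 100 m^{n/2+8.47}$ forces $n \ge 2(\log_m K - \log_m 100 - 8.47) \cdot$, one gets a lower bound on $n$ in terms of $\log_m K$. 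I would then split into cases according to whether $m$ is small or large relative to a fixed power of $K$, and in each case bound $N = m^n$ from below. When $n$ is at least some absolute constant (say $n \geq 170$), the ``$+8.47$'' in the exponent is negligible: $K \leq 100 m^{n/2+8.47} \leq m^{n/2 + 9} \leq m^{(n/2)(1 + 18/n)} \leq m^{(n/2)(1+18/170)}$, hence $N = m^n \geq K^{2/(1+18/170)} = K^{2 \cdot 170/188} = K^{170/94} > K^{19/18}$ with plenty of room. When $n$ is below that constant, $m$ must be at least a fixed root of $K$ (since $m^{n/2+8.47} \geq K/100$ with bounded $n$ forces $m$ polynomially large in $K$), and then $N = m^n \geq m > K^{c}$ for the appropriate constant, which again beats $K^{19/18}$ once the threshold constant is chosen correctly.

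Concretely, I would choose the case threshold so that the exponent $\frac{n}{n/2 + 8.47}$ exceeds $\frac{19}{18}$: solving $\frac{2n}{n+16.94} \ge \frac{19}{18}$ gives $36 n \ge 19 n + 19 \cdot 16.94$, i.e. $17 n \ge 321.86$, i.e. $n \ge 19$ (roughly). So for $n \ge 19$ the chain $N = m^n \ge (K/100)^{2n/(n+16.94)} \ge (K/100)^{19/18+\epsilon}$ closes, and for $n \le 18$ the family has size $K \le 100 m^{n/2 + 8.47} \le 100 m^{17.47}$, forcing $m \ge (K/100)^{1/17.47}$, so $N = m^n \ge m \ge (K/100)^{1/17.47}$ — this last bound is \emph{too weak}, so for small $n$ I instead need $N = m^n$ with $n$ as small as, say, $3$ (the smallest meaningful query count), giving $N = m^3 \ge (K/100)^{3/17.47} < K^{19/18}$, which fails. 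Hence the real content is that small $n$ simply does not arise for the codes under consideration, or one handles it via the exact known constructions; I would state the corollary for the regime where $n$ grows with $K$, matching how \cite{DGY11} phrases it, and cite their analysis for the precise relation between $q$, $m$, $n$, and $K$.

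The main obstacle, therefore, is bookkeeping: making the passage from ``$K \le 100 m^{n/2+8.47}$'' to ``$N = m^n \ge K^{19/18}$'' fully rigorous requires pinning down the exact relation between the code parameters $(K,N,q)$ and the MV-family parameters $(t,m,n)$ as used in \cite{DGY11} — in particular the polynomial overhead in $N$ and the constraint linking $m$ to $q$ — and then verifying the constant $19/18$ survives with these overheads for \emph{all} admissible $n$. I expect this reduces to checking that $\frac{2n}{n + 16.94 + O(1)} \geq \frac{19}{18}$ holds for the relevant range of $n$, together with a separate elementary argument (or a direct appeal to the structure of the \cite{DGY11} construction) ruling out the few small values of $n$ where the inequality would otherwise be in doubt.
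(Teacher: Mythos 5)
Your large-$n$ case is exactly the paper's argument: for $n\ge 19$ one has $n/2+8.47\le(1/2+8.47/19)n<\tfrac{18}{19}n$, so $K\le m^{n/2+8.47}<N^{18/19}$ and $N>K^{19/18}$. The problem is the complementary case $n\le 18$, which you correctly diagnose as failing under your approach ($K\le 100\,m^{17.47}$ only gives $N=m^n\ge m\ge(K/100)^{1/17.47}$, far below $K^{19/18}$) but then do not repair: you propose to restate the corollary only for the regime where $n$ grows with $K$, which weakens the claim rather than proving it. That is a genuine gap, since the statement is meant to hold for every MV code, including those built from families with small $n$ and huge $m$ (e.g. the \cite{YGK12}-type constructions, which live precisely in that regime).

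The paper closes the small-$n$ case with a \emph{different} upper bound on MV families, namely $\MV(m,n)\le m^{\,n-1+o_m(1)}$ from \cite{DGY11}, valid for general $m$. For $n\le 18$ fixed and $K$ large, $m$ must be large, so the $o_m(1)$ term is below $1/19$ and one gets $K<m^{\,n-\frac{18}{19}}\le m^{\,n-\frac{n}{19}}=m^{\frac{18}{19}n}=N^{18/19}$, hence $N>K^{19/18}$. Amusingly, you brush against exactly this ingredient when you mention ``the lower bound $\MV(m,n)\ge m^{\,n-1+o_m(1)}$ from \cite{DGY11}'' --- the direction of that inequality is the issue: what is needed (and what the paper's proof actually uses, despite its own prose also calling it a ``lower bound'') is the \emph{upper} bound $K\le m^{\,n-1+o_m(1)}$. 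With that bound in hand, the case split at $n=19$ you computed is the right one and the rest of your arithmetic goes through; without it, the corollary as stated is not proved.
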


\begin{proof}
Given an MV family in $\mathbb{Z}_m^n$ with size $t$, we can encode a message of length $K=t$ into a codeword of length $N=m^n$.

If $n\geq19$, by Theorem~\ref{thm:main} we have $K\leq m^{n/2+8.47}$. Hence $K\leq m^{(1/2+8.47/19)n}<m^{\frac{18}{19}n}=N^{\frac{18}{19}}$ and $N>K^{\frac{19}{18}}$.

If $n\leq18$, it was shown in \cite{DGY11} that $K\leq m^{n-1+o_m(1)}$. Hence $K<m^{n-\frac{18}{19}}\leq m^{n-\frac{n}{19}}=m^{\frac{18}{19}n}=N^{\frac{18}{19}}$ and $N>K^{\frac{19}{18}}$. Note that here we assumed $m$ is sufficient large. This is reasonable because we are considering encoding an arbitrarily long message and $K$ is sufficiently large.
\end{proof}

\section{The case of distinct prime factors}\label{sec-prime}

If $m$ is a product of distinct primes, the bound can be improved to $m^{n/2+4+o_m(1)}$. The proof follows the same outline as general composite $m$.

\begin{thm}
Let $m$ be a product of distinct primes. For every MV family $(U,V)$ in $\mathbb{Z}_m^n$,  $|(U,V)|\leq100m^{n/2+4+o_m(1)}$, where $o_m(1)$ goes to $0$ as $m$ grows.
\end{thm}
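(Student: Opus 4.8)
The plan is to mirror the proof of Theorem~\ref{thm:main} essentially verbatim, but with a sharper choice of the function $f$ exploiting the extra structure available when $m$ is squarefree. Recall that the only place the general argument incurs a loss beyond the desired exponent $n/2+4$ is the factor $\prod_{i=1}^k f(s_i)^2$ coming from the Fourier-bias bound in Lemma~\ref{lem:bias}; since $\prod_i s_i \le m$, if we could take $f(s)=s^{o(1)}$ we would be done. The obstruction is the constraint $\sum_{s=2}^\infty 1/f(s)\le 0.99$, which forces $f(s)$ to grow roughly like $s(\log s)^{1+\eps}$ in general. The key observation for squarefree $m$ is that in each application of Lemma~\ref{lem:main}, the modulus $r_3$ divides $m$, hence is itself squarefree, and so the divisor $s\mid r_3$ produced is squarefree too; moreover the product $s_1\cdots s_k$ of all the divisors peeled off divides $m$, so these are \emph{coprime} squarefree integers. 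Therefore we only ever need the bound $1/f(s)$ to sum to a constant when $s$ ranges over a set of pairwise coprime integers — or even just over \emph{squarefree} integers — and this is a much weaker requirement than summability over all integers.

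First I would re-examine Lemma~\ref{lem:bias} and Lemma~\ref{lem:main} to confirm that nothing breaks if $f$ is only required to satisfy $\sum_{s\ge 2,\ s\text{ squarefree}} 1/f(s)\le 0.99$ (the sum over $T_d$ in the proof of Lemma~\ref{lem:bias} only involves orders $s_d$ that, in our iterated application, are always squarefree divisors of $m$ — strictly speaking one should phrase a variant of Lemma~\ref{lem:bias} conditioned on $m$ being squarefree so that every $s_d$ is squarefree). Then I would choose $f(s) = s \cdot g(\omega(s))$ or, more simply, exploit that the number of squarefree integers up to $x$ with the extra coprimality is sparse; concretely, one can afford $f(s) = s$ times a slowly growing function and still get $\sum 1/f(s)$ over squarefree $s$ bounded, because $\sum_{s\text{ squarefree}} 1/(s\log^{1+\eps} s)$ converges. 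Actually the cleanest route giving the $o_m(1)$ in the exponent: since $s_1,\dots,s_k$ are coprime with product $\le m$, we have $k \le \omega(m) = O(\log m/\log\log m)$ and each $f(s_i)^2 \le s_i^{2} \cdot (\text{slowly growing})$, so $\prod_i f(s_i)^2 \le (\prod_i s_i)^{2} \cdot h(m) \le m^{2}\cdot h(m)$ — that only reproduces the general bound. The improvement must instead come from choosing $f(s)=s^{1+\eps_m}$ with $\eps_m\to 0$: the constraint $\sum_{s\ge 2} s^{-(1+\eps_m)} \le 0.99$ cannot hold as $\eps_m\to 0$ over \emph{all} $s$, but over squarefree $s$ with the telescoping coprimality, one sets $f(s) = s^{1+\eps}$ and notes $\prod_i s_i^{1+\eps} = (\prod_i s_i)^{1+\eps} \le m^{1+\eps}$, while $\sum_{s=2}^\infty s^{-(1+\eps)} = \zeta(1+\eps)-1 \to \infty$ as $\eps\to 0$ — so this still does not close unless we truncate: since each $s_i \ge 2$ and there are at most $\log_2 m$ of them, we only need $1/f(s)$ summable over $s \in \{2,\dots,m\}$, and $\sum_{s=2}^{m} s^{-(1+\eps)}$ with $\eps = \eps(m) = c/\log\log m$ stays bounded by a constant while $m^{1+\eps} = m^{1+o_m(1)}$. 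That is the mechanism, and writing it carefully is the bulk of the work.

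So the steps in order: (1) state the squarefree variant of Lemma~\ref{lem:bias}, where the orders $s$ arising are squarefree divisors of $m$, allowing $f$ to be defined only on $\{2,\dots,m\}$ with $\sum_{s=2}^m 1/f(s) \le 0.99$; (2) observe Lemma~\ref{lem:main} goes through unchanged, noting that squarefreeness of $r_3$ is preserved since $r_3/s$ and $r_1 s$ (or $r_2 s$) still multiply to $m$ and $s\mid r_3\mid m$ forces $\gcd(r_1 s, r_3/s)$ issues to vanish — here one must double-check that the new "$r_1$" stays coprime appropriately, but since we never use coprimality of $r_1,r_2,r_3$ in Lemma~\ref{lem:main}, this is automatic; (3) run the iteration of Theorem~\ref{thm:main}'s proof to get $|(U,V)| \le 100 m^{n/2+5}\prod_{i=1}^k f(s_i)^2$ with $s_1,\dots,s_k$ pairwise coprime, $\prod s_i \le m$, $k \le \log_2 m$; (4) choose $f(s) = s^{1+\eps(m)}$ with $\eps(m) \to 0$ slowly enough that $\sum_{s=2}^m s^{-(1+\eps(m))} \le 0.99$ (e.g. $\eps(m) = \Theta(\log\log\log m/\log\log m)$, using $\sum_{s=2}^m s^{-(1+\eps)} \approx \eps^{-1}(1 - m^{-\eps})$, so one needs roughly $\eps^{-1} = O(1)$ relative to... actually $\sum_{s=2}^{m} s^{-(1+\eps)} \le \int_1^\infty t^{-(1+\eps)}dt = 1/\eps$, which is too big; the correct estimate is that the tail $\sum_{s\ge S} s^{-(1+\eps)}$ is small once $S$ is large, and the head $\sum_{2\le s< S} s^{-(1+\eps)} \to \sum 1/s$ which diverges — so one actually needs $f(s) = s(\log s)^{1+\eps}$ type for the \emph{small} $s$ and can be looser for large; this interplay is exactly the delicate part); (5) conclude $\prod_i f(s_i)^2 \le m^{2+o_m(1)}$, giving $|(U,V)| \le 100 m^{n/2 + 5 + 2 + o_m(1)}$ — wait, that gives $n/2+7$, not $n/2+4$, so in fact one must \emph{also} tighten the constant-$4$ bookkeeping inside Lemma~\ref{lem:main} (the $s^{n/2+4}$ and the separate $m^{n/2+5}$ have slack: $m^{n/2+4}\cdot m = m^{n/2+5}$, and $\prod s_i^{n/2+4}$ should really be bounded using $\prod s_i \le m$ as $m^{n/2} \cdot (\prod s_i)^4 \le m^{n/2+4}$; combined with the initial $100m$ and $f$-factors $\le m^{o(1)}$ one gets $100 m^{n/2+4+1+o_m(1)}$). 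Reconciling this down to exactly $n/2+4+o_m(1)$ will require squeezing the extra $+1$ out — presumably by a more careful accounting where the factor-of-$m$ from "$t \le 100m$ at termination" is absorbed into the $o_m(1)$, i.e.\ $100m = m^{1+o_m(1)}$ is wrong, but $100m \le m^{1+o(1)}$ only for... hmm, $100m = m \cdot 100$ and $100 = m^{o_m(1)}$, so $100 m = m^{1+o_m(1)}$, which still leaves a genuine $+1$. The resolution the authors most likely use: terminate not at $t < 100m$ but track the bound differently, or note $m^{n/2+4}$ already absorbs everything once $\prod s_i < m$ strictly and the last step is handled separately.

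The main obstacle, then, is exactly this constant-chasing: getting the exponent down to precisely $n/2+4+o_m(1)$ rather than $n/2+5+o_m(1)$ or worse requires (a) the squarefree-specific choice of $f$ with $f(s) = s^{1+o(1)}$ valid on the relevant range, which uses that the $s_i$ are coprime (so their product, not their sum of logs, is what is bounded by $m$), and (b) a tightened version of the per-step loss in Lemma~\ref{lem:main} for squarefree moduli — in particular, when $s$ is squarefree and coprime to $r_2$, the quantity $\gcd(s,r_2)=1$, which collapses the $\gcd(s,r_2)\cdot s \le s^2$ loss in \eqref{eq:last} down to just $s$, saving a full factor and turning $s^{n/2+4}$ into $s^{n/2+3}$ or better per step. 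Aggregating $\prod_i s_i^{\le n/2 + \text{const}} \le m^{n/2+\text{const}}$ then lands the clean bound. I expect writing part (b) carefully — verifying that $\gcd(s_i, r_2^{(i)}) = 1$ at every stage because $s_i \mid m$ is squarefree and the prime factors of $s_i$ have been "used up" and moved into $r_1$ (or $r_2$) — to be the genuinely new content, with everything else being a transcription of Sections~\ref{sec-prooflem} and~\ref{sec-proofthm}.
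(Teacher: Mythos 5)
There is a genuine gap. You correctly identify one of the two squarefree-specific savings the paper uses --- namely that $\gcd(s,r_2)=1$ collapses the loss $\gcd(s,r_2)\cdot s$ in~(\ref{eq:last}) to $s$ --- and your instinct that the $s_i$ are coprime divisors of $m$, so that $\prod_i s_i\le m$ and only $O(1)$ of them can be small, is exactly how the paper extracts the $o_m(1)$. (Your choice of $f$ wanders, but the paper's actual choice, $f(s)=3s\ln^2 s$, is of the ``$s$ times polylog'' form you eventually gesture at; note the head of $\sum s^{-(1+\eps)}$ diverging is why $f(s)=s^{1+\eps(m)}$ cannot work, as you yourself observe.) The problem is that this alone does not reach $n/2+4$. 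With only the gcd saving, the per-step loss is $s^{n/2+2}f(s)^2\cdot s=s^{n/2+3}f(s)^2$, and aggregating gives $100m\cdot m^{n/2+3}\cdot\prod_i f(s_i)^2\le 100m^{n/2+6+o_m(1)}$ --- your own bookkeeping stalls at $n/2+7$ and you acknowledge you cannot squeeze it down.

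The missing idea is in Step 1--2 of Lemma~\ref{lem:main}: when $m$ is squarefree, $r_1r_2$ is coprime to $s$, so one can take $\tau_1,\tau_2$ with $\tau_ir_i\equiv1\pmod s$ and rewrite the character as $\omega_{r_3}^{j\langle\vec{u},\vec{v}\rangle/(r_1r_2)}=\omega_{r_3}^{j\tau_1\tau_2\langle\vec{u},\vec{v}\rangle}$. This lets one bucket $U$ and $V$ directly by $\vmod{\vec{u}}{s}$ and $\vmod{\vec{v}}{s}$ and apply Cauchy--Schwarz straight to the bias $\frac{1}{sf(s)}$ from~(\ref{eq:start}), skipping the $\widetilde{\vec{u}}$-averaging step entirely. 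That averaging step is what squares the bias in the general argument (producing $\frac{1}{s^2f(s)^2}$ before the Cauchy--Schwarz of Step 3), and eliminating it improves the bucket density bound from $1/(s^{n/2+2}f(s)^2)$ to $1/(s^{n/2+1}f(s))$ --- i.e.\ it replaces $f(s)^2$ by $f(s)$ and saves a further factor of $s$. Combined with $\gcd(s,r_2)=1$, the per-step loss becomes $s^{n/2+2}f(s)$, and then $100m\cdot m^{n/2+2}\cdot\prod_if(s_i)\le100m^{n/2+3}\cdot m^{1+o_m(1)}=100m^{n/2+4+o_m(1)}$. Without this substitution your route cannot close the gap between $n/2+6$ (or $+7$) and $n/2+4$.
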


\begin{proof}
The proof is similar to Theorem~\ref{thm:main}. We only sketch the changes here.

First, we improve the size of $(U',V')$ found in Lemma~\ref{lem:main} to $t/(s^{n/2+2}f(s)^2)$. Since $m$ is a product of distinct primes, $r_1$ and $r_2$ must be coprime to $s$, where $s$ is the number in inequality~(\ref{eq:start}). Let $\tau_1$ and $\tau_2$ be integers that $\tau_1r_1\equiv1\pmod s$ and $\tau_2r_2\equiv1\pmod s$, we have
\[\omega_{r_3}^{j\frac{\langle\vec{u},\vec{v}\rangle}{r_1r_2}}=\omega_{r_3}^{j\langle\vec{u},\vec{v}\rangle\tau_1\tau_2}.\]
We partition $U$ and $V$ into buckets according to $\vec{u}$ modulo $s$ and $\vec{v}$ modulo $s$: $U=\bigcup\limits_{\vec{w}\in\mathbb{Z}_s^n}B(\vec{w},U)$ and $V=\bigcup\limits_{\vec{w}\in\mathbb{Z}_s^n}B(\vec{w},V)$, where
\[B(\vec{w},U)=\{\vec{u}\in U\mid\vmod{\vec{u}}{s}=\vec{w}\}\]
and
\[B(\vec{w},V)=\{\vec{v}\in V\mid\vmod{\vec{v}}{s}=\vec{w}\}.\]
We still use $p_{\vec{w}}$ to denote $|B(\vec{w},U)|/t$ and $q_{\vec{w}}$ to denote $|B(\vec{w},V)|/t$. By inequality~(\ref{eq:start}),
\begin{eqnarray*}
\left(\frac{1}{sf(s)}\right)^2 & \leq & \left|\E_{\vec{u}\sim U \atop \vec{v}\sim V}\left[\omega_{r_3}^{j\langle\vec{u},\vec{v}\rangle\tau_1\tau_2}\right]\right|^2 \\
& = & \left|\sum_{\vec{w}\in\mathbb{Z}_s^n}\sum_{\vec{v}\in V}p_{\vec{w}}\cdot\frac{1}{t}\cdot\omega_{r_3}^{j\langle\vec{w},\vec{v}\rangle\tau_1\tau_2}\right|^2 \\
& \leq & \left(\sum_{\vec{w}\in\mathbb{Z}_s^n}p_{\vec{w}}^2\right)\cdot\left(\sum_{\vec{w}\in\mathbb{Z}_s^n}\left|\sum_{\vec{v}\in V}\frac{1}{t}\cdot\omega_{r_3}^{j\langle\vec{w},\vec{v}\rangle\tau_1\tau_2}\right|^2\right) \\
& = &  \left(\sum_{\vec{w}\in\mathbb{Z}_s^n}p_{\vec{w}}^2\right)\cdot\left(\sum_{\vec{v},\widetilde{\vec{v}}\in V}\frac{1}{t^2}\cdot\sum_{\vec{w}\in\mathbb{Z}_s^n}\omega_{r_3}^{j\langle\vec{w},\vec{v}-\widetilde{\vec{v}}\rangle\tau_1\tau_2}\right) \\
& = & \left(\sum_{\vec{w}\in\mathbb{Z}_s^n}p_{\vec{w}}^2\right)\cdot\left(\sum_{\vec{w}\in\mathbb{Z}_s^n}q_{\vec{w}}^2\right)\cdot s^n.
\end{eqnarray*}
We can see either $\sum p_{\vec{w}}^2\geq1/(s^{n/2+1}f(s))$ or $\sum q_{\vec{w}}^2\geq1/(s^{n/2+1}f(s))$. Without loss of generality, assume $\sum p_{\vec{w}}^2\geq1/(s^{n/2+1}f(s))$. By
\[\max\{p_{\vec{w}}\}=\max\{p_{\vec{w}}\}\cdot\sum p_{\vec{w}}\geq\sum p_{\vec{w}}^2,\]
there exists a bucket with size $|B(\vec{w},U)|\geq t/(s^{n/2+1}f(s))$. Let $\widetilde{U}$ be that bucket, and $\widetilde{V}$ be the subset of $V$ with the same indices. Then $(\widetilde{U},\widetilde{V})\subseteq(U,V)$ is an MV family of size at least $t/(s^{n/2+1}f(s))$.

Next we find $(U',V')\subseteq(\widetilde{U},\widetilde{V})$ using the same method  as in Lemma~\ref{lem:main}. By inequality~(\ref{eq:last}),
\[|(U',V')|\geq\frac{|(\widetilde{U},\widetilde{V})|}{\gcd(s,r_2)\cdot s}=\frac{|(\widetilde{U},\widetilde{V})|}{s}\geq\frac{t}{s^{n/2+2}f(s)}.\]

At last, we use the proof of Theorem~\ref{thm:main} except $f(s)=\frac{1}{3s\ln^2s}$. One can verify $\sum_{s=2}^{\infty}\frac{1}{f(s)}<0.99$. Let $s_1,s_2,\ldots,s_k$ be the numbers divided from $r_3$ in each round, by the proof of Theorem~\ref{thm:main},
\[|(U,V)|\leq100m\prod_{i=1}^ks_i^{n/2+2}f(s_i)\leq100m^{n/2+3}\prod_{i=1}^k(3s_i\ln^2s_i)\leq100m^{n/2+4}\prod_{i=1}^k(3\ln^2s_i).\]
For a sufficiently large integer $s$, we have $3\ln^2s<s^{\epsilon}$, where $\epsilon$ is an arbitrary fixed small number. When $m\to\infty$, all $s_1,s_2,\ldots,s_k$ except a constant number of them must be that large. Take $\epsilon\rightarrow0$, we have $|(U,V)|\leq m^{n/2+4+o_m(1)}$.
\end{proof}

\bibliographystyle{alpha}

\bibliography{mvbound}

\end{spacing}
\end{document}